\newtheorem{theorem}{Theorem}[section]
\newaliascnt{lemma}{theorem}
\newtheorem{lemma}[lemma]{Lemma}
\newaliascnt{corollary}{theorem}
\newtheorem{corollary}[corollary]{Corollary}
\newaliascnt{proposition}{theorem}
\newtheorem{proposition}[proposition]{Proposition}
\newaliascnt{potato}{theorem}
\newaliascnt{definitionlemma}{theorem}
\newaliascnt{conjecture}{theorem}
\newaliascnt{question}{theorem}
\theoremstyle{definition}
\newaliascnt{definition}{theorem}
\newtheorem{definition}[definition]{Definition}
\newaliascnt{remark}{theorem}
\newtheorem{remark}[remark]{Remark}
\newaliascnt{example}{theorem}
\newaliascnt{notation}{theorem}
\definecolor{darkblue}{rgb}{0.6,0,0.1}
\tikzset{>=stealth',
	head/.style = {fill = white, text=black},
	plaque/.style = {draw, rectangle, minimum size = 10mm}, 
	pil/.style={->,thick},
	junct/.style = {draw,circle,inner sep=0.5pt,outer sep=0pt, fill=black}
}
\newcommand{\bG}{\mathbb{G}}
\newcommand{\bP}{\mathbb{P}}
\newcommand{\bZ}{\mathbb{Z}}
\newcommand{\cZ}{\mathcal{Z}}
\newcommand{\cX}{\mathcal{X}}
\newcommand{\cY}{\mathcal{Y}}
\newcommand{\cO}{\mathcal{O}}
\newcommand{\cE}{\mathcal{E}}
\newcommand{\cH}{\mathcal{H}}
\newcommand{\cL}{\mathcal{L}}
\newcommand{\cM}{\mathcal{M}}
\newcommand{\cQ}{\mathcal{Q}}
\newcommand{\cN}{\mathcal{N}}
\newcommand{\sI}{\mathscr{I}}
\newcommand{\red}{\mathrm{red}}
\newcommand{\sm}{\mathrm{sm}}
\DeclareMathOperator{\Gr}{Gr}
\DeclareMathOperator{\ps}{ps}
\DeclareMathOperator{\s}{s}
\DeclareMathOperator{\Isom}{Isom}
\DeclareMathOperator{\uIsom}{\underline{\Isom}}
\DeclareMathOperator{\GL}{GL}
\DeclareMathOperator{\Band}{Band}
\DeclareMathOperator{\Spec}{Spec}
\DeclareMathOperator{\Var}{Var}
\newcommand{\thickslash}{\mathbin{\!\!\pmb{\fatslash}}}
\newif\ifhascomments \hascommentstrue
\newcommand{\matt}[1]{{\color{red}[[\ensuremath{\spadesuit\spadesuit\spadesuit} #1]]}}
\newcommand{\dori}[1]{{\color{blue}[[\ensuremath{\clubsuit\clubsuit\clubsuit} #1]]}}
\newcommand{\elden}[1]{{\color{blue}[[\ensuremath{\clubsuit\clubsuit\clubsuit} #1]]}}
\newcommand{\elden}[1]{}
\newcommand{\dori}[1]{}
\newcommand{\matt}[1]{}
\begin{document}
	
	\title{Proper splittings and projectivity for good moduli spaces}
	
	\author{Dori Bejleri}
	\thanks{DB was partially supported by NSF grant DMS-2401483.}
	\address[DB]{Department of Mathematics, University of Maryland, College Park MD 20742, USA}
	\email{dbejleri@umd.edu}
	
	\author{Elden Elmanto}
	\thanks{}
	\address[EE]{Department of Mathematics, University of Toronto, Toronto ON M5S2E4, Canada}
	\email{elden.elmanto@utoronto.ca}
	
	\author{Matthew~Satriano}
	\thanks{MS was partially supported by a Discovery Grant from the
		National Science and Engineering Research Council of Canada and a Mathematics Faculty Research Chair from the University of Waterloo.}
	\address[MS]{Department of Pure Mathematics, University
		of Waterloo, Waterloo ON N2L3G1, Canada}
	\email{msatrian@uwaterloo.ca}
	
	\date{\today}
	\keywords{}
	\subjclass[2020]{}

	\begin{abstract} We show that any good moduli space $\pi \colon \cX \to Y$ has a splitting after a proper, generically finite covering of $Y$. As an application we generalize Koll\'ar's ampleness lemma to give a criterion for projectivity of a good moduli space. 
	\end{abstract}
	
	\maketitle

	\numberwithin{theorem}{section}
	\numberwithin{lemma}{section}
	\numberwithin{corollary}{section}
	\numberwithin{proposition}{section}
	\numberwithin{conjecture}{section}
	\numberwithin{question}{section}
	\numberwithin{remark}{section}
	\numberwithin{definition}{section}
	\numberwithin{example}{section}
	\numberwithin{notation}{section}
	\numberwithin{equation}{section}
	
	\section{Introduction}
	\label{sec:intro}
	
	An essential tool in the study of Deligne-Mumford stacks is the existence of finite covers by schemes (e.g. \cite[Theorem 16.6]{LMB}, \cite[Theorem 2.7]{EHKV}, \cite{Olsson2005}). By contrast, no such covers exist for general Artin stacks. The goal of this paper is to offer a replacement for stacks with a good moduli space.  More precisely, if $\pi \colon \cX \to Y$ is a good moduli space with $Y$ separated, we show that there exists a scheme $Y'$ and a morphism $Y' \to \cX$ such that $Y' \to Y$ is proper, surjective and generically finite (Theorem \ref{thm:main-gms-splitting}). While the map $Y' \to \cX$ is not surjective, it is surjective on $\textbf{S}$-equivalence classes. 
	
	Our main application is a generalization of Koll\'ar's ampleness lemma to good moduli spaces (Theorem \ref{thm:kollar-ampleness}). The modern GIT-free approach to constructing moduli spaces proceeds in three steps: construct an algebraic stack $\cX$ via Artin's axioms, show that the stack admits a moduli space $Y$ \cite{keelmori, AHLH}, produce an ample line bundle to show $Y$ is projective. Theorem \ref{thm:kollar-ampleness} gives a general criterion to carry out the third step. Theorems \ref{thm:main-gms-splitting} and \ref{thm:kollar-ampleness} are generalizations of the strategy used to prove projectivity of good moduli spaces of Calabi-Yau  varieties in \cite{ABBDILW} (especially \cite[Theorem 14.11]{ABBDILW}). 
	
	\subsection{Splittings of good moduli spaces by alterations}
	
	Recall that a map between $X \to Y$ between integral algebraic spaces is an \emph{alteration} if it is proper, surjective, and generically finite. More generally, a map of Noetherian algebraic spaces $X \to Y$ is an \emph{alteration} if it is proper, surjective, generically finite and every irreducible component of $X$ dominates an irreducible component of $Y$.

	\begin{theorem}\label{thm:main-gms-splitting}
		Let $\pi\colon\cX\to Y$ be a good moduli space where 
		$\cX$ is an Artin stack with affine stabilizers and separated diagonal over  $Y$, and of finite type over a Noetherian scheme $S$. If $f\colon Z\to Y$ is a morphism of finite type from a separated algebraic space over $S$, then there exists an alteration $g\colon Z'\to Z$ and a commutative diagram
		\[
		\xymatrix{
			Z'\ar[r]\ar[d]_-{g} & \cX\ar[d]^-{\pi}\\
			Z\ar[r]^-{f} & Y
		}
		\]
	\end{theorem}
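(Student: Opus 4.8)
The plan is to argue by induction on $\dim Z$, after two preliminary reductions. Good moduli spaces are stable under arbitrary base change, and the conditions on $\cX$ (affine stabilizers, separated diagonal over the base, finite type over $S$) are preserved under base change; so replacing $\pi\colon\cX\to Y$ by $\cX\times_Y Z\to Z$ we may assume $f=\id_Z$, and the task becomes: find an alteration $g\colon Z'\to Z$ and a lift $Z'\to\cX$ over $Z$. Next, writing $Z=\bigcup_j Z_j$ for the irreducible components and base changing to each $Z_j^{\red}$, the disjoint union of alterations of the $Z_j^{\red}$ maps to $Z$ through the finite surjection $\coprod_j Z_j^{\red}\to Z$; since compositions and finite disjoint unions of alterations are again alterations, we may assume $Z$ is integral. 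For the base case $\dim Z=0$ one further reduces, via the alteration $Z_{\red}\to Z$ and then components, to $Z=\Spec k$ with $k$ a field: since $\pi$ is surjective, $\cX$ is nonempty, it has a unique closed point $x_0$ (closed points of $\cX$ biject with points of the good moduli space) whose stabilizer is linearly reductive, and its residue gerbe $\mathcal{G}_{x_0}\to\Spec\kappa(x_0)$ (with $\kappa(x_0)/k$ finite) is a gerbe, hence fppf-locally neutral, hence has an $L$-point for some finite field extension $L/\kappa(x_0)$; then $\Spec L\to\mathcal{G}_{x_0}\hookrightarrow\cX$ is a lift along the alteration $\Spec L\to\Spec k$.

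For the inductive step ($\dim Z=n>0$, $Z$ integral) I would first produce a lift over a dense open of an alteration. Applying the base case to $\cX\times_Z\Spec k(Z)\to\Spec k(Z)$ gives a finite extension $L/k(Z)$ and a lift $\Spec L\to\cX$ over the generic point $\eta$. Spreading out — using that $\cX$ and $Z$ are of finite type over the Noetherian $S$ — extends this to a lift $\widetilde U\to\cX$ over $Z$, where $\widetilde U$ is an integral scheme finite over some dense open $U\subseteq Z$ with function field $L$. Nagata compactification applied to the separated, finite-type morphism $\widetilde U\to Z$, followed by passage to the reduced closure $Z_1$ of $\widetilde U$, then yields an alteration $Z_1\to Z$ together with a lift $\sigma_1\colon\widetilde U\to\cX$ over $Z$ on the dense open $\widetilde U\subseteq Z_1$.

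What remains — and what I expect to be the main obstacle — is to extend $\sigma_1$ across the boundary $D_1:=Z_1\smallsetminus\widetilde U$, which has $\dim D_1<n$. Two ingredients are available: the inductive hypothesis applied to the good moduli space $\cX\times_Z D_1^{\red}\to D_1^{\red}$ gives an alteration $D_1'\to D_1^{\red}$ with a lift $D_1'\to\cX$ over $Z$; and $\pi$, being a good moduli space morphism, is universally closed, hence satisfies the existence part of the valuative criterion, so along any trait mapping to $Z_1$ whose generic point lands in $\widetilde U$ the lift $\sigma_1$ extends after a finite extension. Using the valuative criterion at the finitely many codimension-one points of $D_1$ — and passing to a single finite cover of $Z_1$ that simultaneously handles all valuations above each such point, by a Riemann–Zariski quasi-compactness argument — one extends $\sigma_1$, after a further alteration, over a dense open whose complement has codimension $\ge 2$; the separatedness of the diagonal of $\cX$ over $Z$ (which makes the relevant relative isomorphism algebraic spaces separated) is what permits the locally defined extensions to be compared and patched, at least generically on overlaps. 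The complement now has dimension $<n$, so the inductive hypothesis applies there as well, and one is reduced, via a formal/patching argument, to gluing the lift over the dense open with the inductively produced lift over the alteration of the codimension-$\ge 2$ locus. Composing all the alterations constructed along the way gives the required $Z'\to Z$ and lift $Z'\to\cX$. The genuinely delicate points are the patching of locally defined lifts (controlled by the separated diagonal) and arranging the inductive descent over the boundary so that it terminates.
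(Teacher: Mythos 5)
Your opening reductions are sound and partially parallel the paper: base changing along $f$ to assume $Z=Y$, reducing to $Z$ integral, and producing a lift over the generic point (your $\Spec L\to\cX$ lands at the closed, i.e.\ polystable, point of the generic fiber) which you then spread out to a dense open of an alteration. The paper does essentially this via the density of the stable locus inside the closure of $\cX^{\ps}$. The problem is everything after that. Your plan to extend the generic lift across the boundary rests on two moves that do not work: (a) patching the lift on a dense open $V\subset Z'$ with an inductively produced lift on an alteration $D'\to Z'\smallsetminus V$, and (b) extending across a closed subset of codimension $\ge 2$ by a ``formal/patching argument.'' For (a), a morphism to an Artin stack is not determined by, nor constructible from, its restrictions to an open subspace and the complementary closed subspace; the two partial lifts live on spaces with no common overlap on which to compare them, and there is no alteration of $Z'$ that receives both in a compatible way. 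For (b), there is no Hartogs-type extension for maps to a general Artin stack: a lift defined outside codimension $2$ on a normal, or even regular, base carries a genuine obstruction to extending. The cleanest illustration is already the case where $\cX\to Y$ is a $\bG_m$-gerbe: a trivialization away from a codimension-$2$ locus of a regular scheme extends precisely because of purity for the Brauer group, which is a substantive theorem, not a consequence of induction on dimension plus the valuative criterion. Similarly, your codimension-one extensions obtained from universal closedness of $\pi$ only agree with $\sigma_1$ up to $2$-isomorphism at the generic point, and choosing these $2$-isomorphisms compatibly so that the local extensions glue into a lift on an open subset is itself a torsor/gerbe problem that the separated diagonal alone does not resolve.

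This gap is exactly where the paper spends all of its effort, and it cannot be inducted away. The actual argument first replaces $\cX$ by the closure of $\cX^{\ps}$ and applies the Edidin--Rydh saturated blow-up theorem to factor $\pi$, after modification, as a gerbe over a tame Artin stack followed by a coarse space map; the tame stack is split by a finite cover (EHKV), reducing everything to the case of a gerbe over $Y$. Gerbes are then handled by rigidification along $(I\cX)^0_{\sm}$, reduction from a reductive band $G$ to its center $Z(G)$ via the stack of banded isomorphisms with $BG$, isotriviality of tori over a normal base, and finally annihilation of the resulting classes in $H^2_{\textrm{\'et}}(Y,\bG_m)$ by a finite extension of the function field together with injectivity of the Brauer group of a \emph{regular} alteration into that of its function field. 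None of these inputs appears in your outline, and the inductive scheme you propose would, if it worked, bypass Brauer-group purity entirely; that is a strong signal that the gluing step is not merely ``delicate'' but missing.
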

	
	\begin{remark}\label{rmk:properlystable-factorization}
		\begin{enumerate}[(i)]
			\item The proof of Theorem \ref{thm:main-gms-splitting} shows that $Z'\to\cX$ factors through the closure of the polystable locus $\cX^{\ps}$ (see Definition \ref{def:ps} and Lemma \ref{lem:ps}).
			\item By Chow's lemma for algebraic spaces, we may take $Z'$ to be quasi-projective over $S$. 
			\item If $S$ is finite type over a quasi-excellent scheme $S_0$ of dimension $\le 3$, then we may take $Z'$ to be regular by \cite[Theorem 1.2.5]{Temkin2017}.
		\end{enumerate}
	\end{remark}
	
	It is possible to loosen the Noetherian hypothesis on $S$. In general, Noetherian approximation does not work well for good moduli spaces and so some assumptions are still required, see \cite[\S7]{AHRetalelocal}. We recall the following conditions on an Artin stack $\cY$ introduced in \cite[\S15]{AHRetalelocal}: (FC) there are only a finite number of different characteristics in $\cY$, (PC) every closed point of $\cY$ has positive characteristic, and (N) every closed point of $\cY$ has nice stabilizer (i.e., there is an open and closed multiplicative type normal subgroup of the stabilizer such that the quotient by this subgroup is finite, locally constant, and has order prime to all characteristics). An application of \cite{Rydh2015, RydhNoetherianAbsolute} yields the following.

	\begin{theorem}\label{thm:main-gms-splitting-nonNoeth}
		Theorem \ref{thm:main-gms-splitting} holds when instead of requiring $S$ to be Noetherian, one assumes $S$ is quasi-compact and quasi-separated, $\cX \to S$ is finitely presented, and one of the following holds:~$S$ is $(\mathrm{FC})$, $\cX$ is $(\mathrm{PC})$, or $\cX$ is $(\mathrm{N})$. In this case, $g$ is proper, generically finite and surjective. 
	\end{theorem}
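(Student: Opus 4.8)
The plan is to bootstrap from the Noetherian case (Theorem~\ref{thm:main-gms-splitting}) by Noetherian approximation. Since $S$ is quasi-compact and quasi-separated, write it as a cofiltered limit $S=\varprojlim_{\lambda}S_\lambda$ with each $S_\lambda$ of finite presentation over $\Z$, hence Noetherian, and with affine transition maps. The crucial step is to descend the good moduli space. This is exactly where the hypotheses $(\mathrm{FC})$, $(\mathrm{PC})$, $(\mathrm{N})$ enter: applying the approximation results of \cite[\S15]{AHRetalelocal} (which rest on \cite{Rydh2015, RydhNoetherianAbsolute}) under whichever of these holds, we obtain an index $\lambda$ and a finitely presented morphism $\cX_\lambda\to S_\lambda$ which admits a good moduli space $\pi_\lambda\colon\cX_\lambda\to Y_\lambda$ with $Y_\lambda\to S_\lambda$ finitely presented, such that base change along $S\to S_\lambda$ recovers $\cX\to S$ together with $\pi\colon\cX\to Y$; here one uses in addition that good moduli spaces are stable under arbitrary base change. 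After increasing $\lambda$, the remaining hypotheses of Theorem~\ref{thm:main-gms-splitting} — that $\cX_\lambda\to S_\lambda$ be of finite type and that $\cX_\lambda$ have affine stabilizers and separated diagonal over $Y_\lambda$ — also hold, since they are stable under passage to limits.

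Next I would descend the test datum. As $f\colon Z\to Y$ is of finite type, $Z$ is separated over $S$, and $Y\to S$ is now known to be finitely presented, the usual limit arguments for algebraic spaces \cite{Rydh2015} produce, after increasing $\lambda$ once more, a separated finite-type algebraic space $Z_\lambda$ over $S_\lambda$ together with a finite-type morphism $f_\lambda\colon Z_\lambda\to Y_\lambda$ whose base change along $S\to S_\lambda$ is $f\colon Z\to Y$. Since $S_\lambda$ is Noetherian, Theorem~\ref{thm:main-gms-splitting} applies to $\pi_\lambda$ and $f_\lambda$ and yields an alteration $g_\lambda\colon Z'_\lambda\to Z_\lambda$ together with a morphism $Z'_\lambda\to\cX_\lambda$ making the square commute; by Remark~\ref{rmk:properlystable-factorization} we may moreover take $Z'_\lambda$ quasi-projective over $S_\lambda$, take it regular when the relevant excellence hypothesis holds, and arrange the factorization through $\overline{\cX_\lambda^{\ps}}$.

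I would then base change the whole diagram along $S\to S_\lambda$ and set $Z'=Z'_\lambda\times_{S_\lambda}S$, $g=g_\lambda\times_{S_\lambda}S$, with the induced morphism $Z'\to\cX_\lambda\times_{S_\lambda}S=\cX$; the refinements of Remark~\ref{rmk:properlystable-factorization} descend along this base change. Properness and surjectivity of $g$ are immediate, being stable under base change. For generic finiteness, the locus $V\subseteq Z$ over which the proper morphism $g$ is quasi-finite is open, and there $g$ is finite; since $g_\lambda$ is finite over a dense open $U_\lambda\subseteq Z_\lambda$, its preimage $U_\lambda\times_{S_\lambda}S$ is contained in $V$, so it suffices to know that $V$ is dense in $Z$. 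This in turn reduces, via the presentation $Z=\varprojlim_{\mu\ge\lambda}(Z_\lambda\times_{S_\lambda}S_\mu)$, to the statement that the projection $Z\to Z_\lambda$ carries the generic point of each irreducible component of $Z$ into $U_\lambda$ — a compatibility of irreducible components with cofiltered limits that one secures by a mild refinement of the approximation (e.g.\ replacing $Z_\lambda$ by the schematic image of $Z$, which does not change $Z=Z_\lambda\times_{S_\lambda}S$).

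The substance of the argument lies entirely in the first step: good moduli spaces are not preserved under Noetherian approximation in general, and it is precisely the conditions $(\mathrm{FC})$, $(\mathrm{PC})$, $(\mathrm{N})$ that make \cite[\S15]{AHRetalelocal} applicable and guarantee that the good moduli space $Y$, together with the finitely presented morphism $Y\to S$, descends compatibly with $\cX$. Everything downstream is formal base-change and limit bookkeeping, the only remaining point requiring care being the density statement used to deduce generic finiteness.
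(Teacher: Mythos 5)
Your proposal follows essentially the same route as the paper: its Lemma~\ref{l:SNoeth} performs exactly this Noetherian approximation, using \cite[Propositions B.2--B.3]{Rydh2015} to descend $\cX\to Y\to S$ and \cite[Theorem 5.10]{RydhNoetherianAbsolute} (this is where (FC)/(PC)/(N) enter) to descend the good moduli space, then concludes by the Noetherian case of Theorem~\ref{thm:main-gms-splitting}. The only implementation detail you elide is that in the (FC) case the paper approximates $S$ by schemes of finite type over the semi-localization of $\Spec\Z$ at the occurring characteristics, so that the approximating bases retain property (FC); your closing care about preserving generic finiteness under the final base change is a point the paper itself passes over silently, and your proposed fix is reasonable.
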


	Furthermore, we obtain a variant of Theorem \ref{thm:main-gms-splitting} where we require that our proper splitting extends a given rational map $Z \dashrightarrow \cX$. This is the analogue of the fact that a rational map to a proper scheme can be resolved by blowing up some ideal and it implies that families with polystable generic fiber can be compactified after an alteration and that the variation (Definition \ref{def:variation}) of a family parametrized by a good moduli space behaves well. 
	
	\begin{theorem}\label{thm:main-gms-splitting-rational-version}
		Keep the notation and hypotheses of Theorem \ref{thm:main-gms-splitting} and assume in addition that $S$ is of finite type over a quasi-excellent scheme of dimension at most $3$. Additionally suppose $Z$ is irreducible and there is a rational map $\widetilde{f}\colon Z\dasharrow\cX$ such that the generic point of $Z$ maps to the polystable locus $\cX^{\ps}$ and $\pi\widetilde{f}=f$. Then the conclusion of Theorem \ref{thm:main-gms-splitting} holds where we also require that $Z'\to\cX$ generically agrees with $\widetilde{f}g$. 
	\end{theorem}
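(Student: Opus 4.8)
The plan is to replace $\cX$ by the scheme-theoretic closure of the graph of $\widetilde f$, apply Theorem~\ref{thm:main-gms-splitting} there, and then correct the resulting lift over the generic point by a finite base change. First, we may assume $Z$ is integral, since $Z_{\red}\to Z$ is a finite alteration and alterations compose; fix a dense open $U\subseteq Z$ on which $\widetilde f$ restricts to a morphism $\widetilde f\colon U\to\cX$ with $\pi\widetilde f=f|_U$ and with $\widetilde f(\eta)\in\cX^{\ps}$, where $\eta$ is the generic point of $Z$. Let $\cV\subseteq Z\times_Y\cX$ be the scheme-theoretic image of the graph $\gamma=(\id_U,\widetilde f)\colon U\to U\times_Y\cX\hookrightarrow Z\times_Y\cX$. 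Since good moduli spaces are stable under base change, $Z\times_Y\cX\to Z$ is a good moduli space, and since a closed substack of a stack admitting a good moduli space again admits one --- with good moduli space its scheme-theoretic image in the base --- the projection $\pi_\cV\colon\cV\to Z$ is a good moduli space; indeed $U\xrightarrow{\gamma}\cV\to Z$ is the open immersion $U\hookrightarrow Z$, so $\cV\to Z$ is dominant and hence, as $Z$ is reduced, its scheme-theoretic image is all of $Z$. Moreover $\cV\to\cX$ is representable, being the composite of a closed immersion with the base change of the algebraic-space morphism $Z\to Y$, so $\cV$ has affine stabilizers; its diagonal $\Delta_{\cV/Z}$ is a base change of the separated morphism $\id_Z\times_Y\Delta_{\cX/Y}$, so $\cV$ has separated diagonal over $Z$; and $\cV$ is of finite type over $S$. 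Thus $\pi_\cV\colon\cV\to Z$ satisfies the hypotheses of Theorem~\ref{thm:main-gms-splitting}.

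The only place polystability is used is the structure of the generic fibre of $\pi_\cV$. Since $\widetilde f(\eta)$ is polystable it is a closed point of $\pi^{-1}(f(\eta))$ (Definition~\ref{def:ps}, Lemma~\ref{lem:ps}), so the base change of its residual gerbe is closed in $(Z\times_Y\cX)\times_Z\Spec\kappa(\eta)\cong\cX\times_Y\Spec\kappa(\eta)$; as $\cV\times_Z\Spec\kappa(\eta)$ is reduced (flat base change of the reduced stack $\cV$) and supported on the closure of $\gamma(\eta)$, it equals precisely this residual gerbe, and in particular it is a gerbe over $\kappa(\eta)$ with a single point, represented by $\gamma(\eta)$. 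Were $\widetilde f(\eta)$ not polystable, this fibre would also contain the specialization of $\widetilde f(\eta)$ to the genuine closed point of its fibre, and the argument below would break.

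Next, apply Theorem~\ref{thm:main-gms-splitting} to $\pi_\cV\colon\cV\to Z$ and to $\id_Z\colon Z\to Z$, obtaining an alteration $g_0\colon Z_0\to Z$, which we may take integral, and a morphism $h_0\colon Z_0\to\cV$ with $\pi_\cV h_0=g_0$. Over the generic point $\eta_0$ of $Z_0$, which lies over $\eta$ and inside $g_0^{-1}(U)$, both $h_0$ and $\gamma g_0$ are $\kappa(\eta_0)$-valued points of the fibre of $\pi_\cV$ over $\eta$ lying over its unique point; since that fibre is a gerbe, $\uIsom_\cV(h_0,\gamma g_0)$ is, over $\eta_0$, a torsor under the affine group scheme $\underline{\Aut}_\cV(h_0)$, and as a nonempty algebraic space of finite type over a field it has a point over some finite extension $L$ of $\kappa(\eta_0)$. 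Let $Z'\to Z_0$ be the normalization of $Z_0$ in $L$, which is finite because $Z_0$ is Nagata (the standing hypothesis on $S$ guarantees $Z_0$ is quasi-excellent), and put $g\colon Z'\to Z_0\to Z$, an alteration. Pulling $h_0$ back to $Z'$ and spreading out the isomorphism over $L$ yields a dense open $U'\subseteq g^{-1}(U)\subseteq Z'$ together with an isomorphism $h|_{U'}\cong(\gamma g)|_{U'}$ of morphisms to $\cV$. Postcomposing with $\cV\to\cX$, under which $\gamma$ pulls back to $\widetilde f$, produces a morphism $Z'\to\cX$ with $\pi(Z'\to\cX)=fg$, whose generic point maps to $\widetilde f(\eta)\in\cX^{\ps}$ so that the morphism factors through $\overline{\cX^{\ps}}$, and which agrees with $\widetilde f g$ on the dense open $U'$. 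This is the desired conclusion; the dimension bound on $S$ is used only to additionally take $Z'$ regular, as in Remark~\ref{rmk:properlystable-factorization}(iii).

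The main obstacle will be the interplay of the last two steps. Polystability is what forces $\pi_\cV$ to be generically a gerbe; granting that, the lift handed back by Theorem~\ref{thm:main-gms-splitting} still need not literally agree with $\widetilde f$ over the generic point of $Z_0$, but only after adjusting by an automorphism defined over a finite field extension --- which is precisely why a genuine finite alteration, rather than merely a modification, must be permitted in the conclusion. The remaining verifications --- that $\cV$ inherits the hypotheses of Theorem~\ref{thm:main-gms-splitting}, and that the relevant $\uIsom$ is a torsor under an affine group scheme --- are routine.
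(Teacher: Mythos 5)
Your proof is correct in its essentials, but it takes a genuinely different route from the paper's. The paper reruns the entire reduction chain of Theorem~\ref{thm:main-gms-splitting} (replace $\cX$ by $\overline{\cX^{\ps}}$, apply \cite[Theorem 2.11]{ER}, reduce to gerbes banded by reductive, then central, then multiplicative, then $\bG_m$ groups) while carrying the generic section through every step; this forces a new statement (Proposition~\ref{prop:tame-DM-proper-generic-splitting-extension}, extending a generic section of a tame coarse space after a finite cover via Zariski's Main Theorem) in place of \cite[Theorem 2.7]{EHKV}, an extra check in the $\uIsom$-gerbe of Proposition~\ref{prop:gerbe-abelian}, and, at the end, purity for Brauer groups on a regular alteration to extend the section of the residual $\bG_m$-gerbe --- which is exactly where the hypothesis that $S$ is finite type over a quasi-excellent scheme of dimension $\le 3$ is used. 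You instead treat Theorem~\ref{thm:main-gms-splitting} as a black box: pass to the closure $\cV$ of the graph of $\widetilde f$ in $Z\times_Y\cX$, note that $\cV\to Z$ is again a good moduli space by \cite[Lemma 4.14]{Alper} whose generic fibre is a one-point gerbe precisely because $\widetilde f(\eta)$ is polystable, apply the theorem to get some lift, and reconcile it with $\gamma$ generically via a torsor argument after a finite field extension, absorbed into the alteration by normalizing. This is shorter, and notably it uses quasi-excellence only for the Nagata property (finiteness of the normalization of $Z_0$ in $L$) rather than the dimension bound or Brauer purity, so it would establish the statement under weaker hypotheses on $S$. Two places deserve a touch more care. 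First, your identification of $\cV\times_Z\Spec\kappa(\eta)$ with ``the base change of the residual gerbe'' is slightly off: the base change $\mathcal{G}_{\widetilde f(\eta)}\times_{\kappa(f(\eta))}\kappa(\eta)$ may have several points or be non-reduced; what you actually need (and what holds) is that the residue field of the closed point $\widetilde f(\eta)$ of the finite type fibre $\pi^{-1}(f(\eta))$ is finite over $\kappa(f(\eta))$, so this base change is topologically a finite discrete set of closed points, whence $\gamma(\eta)$ is closed in $\cX\times_Y\Spec\kappa(\eta)$ and $\cV_\eta$ is its residual gerbe, a gerbe over $\kappa(\eta)$ since it carries a $\kappa(\eta)$-point. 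Second, to conclude that $\uIsom(h_0|_{\eta_0},\gamma g_0|_{\eta_0})$ is a nonempty torsor you should observe that both objects lie over the \emph{same} $\kappa(\eta)$-scheme structure on $\Spec\kappa(\eta_0)$ (both composites with $\pi_\cV$ equal $g_0|_{\eta_0}$), and that the $\uIsom$ is affine of finite type because $\pi$, hence $\cV\to Z$, has affine diagonal by \cite[Theorem 6.1]{AHRetalelocal}. With these points supplied, the argument goes through.
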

	
	\begin{corollary}\label{cor:compactification}
		Under the assumptions of Theorem \ref{thm:main-gms-splitting-rational-version}, suppose further that $Y$ is proper. Let $U$ be an irreducible separated algebraic space of finite type over $S$ and suppose $U \to \cX$ is a morphism with generic point mapping to the polystable locus. Then there exists an alteration $U' \to U$, a compactification $U' \subset Z'$ over $S$ and an extension $Z' \to \cX$. 
	\end{corollary}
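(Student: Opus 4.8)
The plan is to realize $U$ as a dense open subspace of an algebraic space proper over $S$ and equipped with a map to $Y$, and then lift this compactification to $\cX$ using Theorem~\ref{thm:main-gms-splitting-rational-version}.

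First I would reduce to the case that $U$ is integral: the canonical closed immersion $U_{\red}\to U$ is finite, hence proper, surjective and generically finite, and it dominates $U$, so it is an alteration; moreover $U_{\red}\to\cX$ is the restriction of $U\to\cX$, so an alteration of $U_{\red}$ with the required compactification and extension yields one for $U$. Assuming now $U$ is integral, note that $U\to\cX\to Y$ is a separated morphism of finite type to $Y$, which is Noetherian as it is proper over $S$; by Nagata compactification for algebraic spaces there is an open immersion $U\hookrightarrow Z_0$ over $Y$ with $Z_0\to Y$ proper. Replacing $Z_0$ by the scheme-theoretic closure of $U$ we may take $Z_0$ integral with $U\subseteq Z_0$ dense. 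Since $Y\to S$ is proper, $Z_0\to S$ is proper, and the structure morphism $f_0\colon Z_0\to Y$ restricts to $U\to\cX\to Y$ on $U$.

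Next I would view $U\to\cX$ as a rational map $\widetilde f\colon Z_0\dasharrow\cX$. Its generic point is the generic point of $U$ and hence maps into $\cX^{\ps}$, and $\pi\widetilde f=f_0$ as rational maps because they agree on the dense open $U$ and $Y$ is separated. The hypotheses of Theorem~\ref{thm:main-gms-splitting-rational-version} are thus met (recall that, by the standing assumptions, $S$ is of finite type over a quasi-excellent scheme of dimension at most $3$), and it produces an alteration $g\colon Z'\to Z_0$ and a morphism $h\colon Z'\to\cX$ over $Y$ that agrees with $\widetilde f g$ generically. I would then set $U':=g^{-1}(U)\subseteq Z'$. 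Base-changing the defining properties of the alteration $g$ along the dense open immersion $U\hookrightarrow Z_0$, the map $U'\to U$ is proper, surjective and generically finite and every irreducible component of $U'$ dominates $U$; so $U'\to U$ is an alteration. Also $U'\hookrightarrow Z'$ is a dense open immersion and $Z'\to S$ is proper, so $U'\subseteq Z'$ is a compactification over $S$, and $h$ is the candidate for the extension $Z'\to\cX$.

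The one step requiring care — and the main obstacle — is to upgrade ``$h$ agrees with $\widetilde f g$ generically'' to ``$h$ restricts on $U'$ to the composite $U'\to U\to\cX$''; without this, $h$ need not extend the given morphism $U\to\cX$ (for instance, for $\cX=[\bA^1/\bG_m]$ with the weight-one action one has $\cX^{\ps}=B\bG_m$, and two line bundles that are isomorphic on a dense open need not be globally isomorphic). The way I would resolve this is by tracking the proof of Theorem~\ref{thm:main-gms-splitting-rational-version}: over the dense open $U$ the rational map $\widetilde f$ is already a morphism, indeed a section of $\pi$ whose image meets $\cX^{\ps}$, so no modification is needed there, exactly as a rational map to a proper scheme is resolved by a modification that is an isomorphism over the locus where the map is already defined. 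Granting that $g$ may be taken to be an isomorphism over $U$ with $h$ restricting to $\widetilde f|_U$ over $U$, one gets $U'=g^{-1}(U)\xrightarrow{\ \sim\ }U$ and $h|_{U'}=(U'\to U\to\cX)$, which would finish the proof. Alternatively, treating Theorem~\ref{thm:main-gms-splitting-rational-version} as a black box providing only generic agreement, one can first pass to a regularizing alteration $U_1\to U$ with $U_1$ regular (\cite[Theorem~1.2.5]{Temkin2017}) and carry out the construction with $Z_0$ chosen regular, using that a resolution of a rational map out of a regular space does not modify the locus where the map is already a morphism.
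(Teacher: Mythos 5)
Your proof takes essentially the same route as the paper: compactify $U$ over $Y$ and then feed the resulting rational map into Theorem \ref{thm:main-gms-splitting-rational-version} (the paper gets the compactification from Chow's lemma plus the closure of the graph of $U\to Y$ in $W\times_S Y$ for a projective compactification $W$ of $U$, rather than from Nagata, but this is immaterial). The subtlety you isolate is genuine, and the paper resolves it exactly as in your first suggestion: its proof invokes ``the above argument'' rather than the bare statement of the theorem, and that argument (Proposition \ref{prop:tame-DM-proper-generic-splitting-extension}, the lift of the given $U$-point to the gerbe $\sI$, and the final purity step) carries the section over the preimage of $U$ through every reduction, so the output $Z'\to\cX$ restricts on $U'=U\times_Z Z'$ to the pullback of $U\to\cX$. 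One correction: you cannot arrange $g$ to be an isomorphism over $U$ --- the reductions use honest finite covers (splitting the coarse space of the rigidification, the isotrivializing \'etale cover of the torus, the normalization in $F'$), so $U'\to U$ is a nontrivial alteration in general; but the corollary only asks for an alteration, and the statement you actually need is the weaker one that $h|_{U'}$ equals the composite $U'\to U\to\cX$, which is what tracking the proof delivers. Your fallback via regular alterations does not substitute for this tracking, since your own $[\bA^1/\bG_m]$ example shows that two maps to a stack agreeing generically need not agree on a regular source either.
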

	
	\begin{remark}
		By \cite[Theorem 1.2.5]{Temkin2017}, we can pick $U' \subset Z'$ to be a normal crossings compactification of a regular scheme. 
	\end{remark}
	
	\begin{remark}
		The assumption on $S$ in Theorem \ref{thm:main-gms-splitting-rational-version} is used only to handle the case of $\bG_m$-gerbes $\cX\to Y$, where our argument makes use of regular alterations obtained in \cite[Theorem 1.2.5]{Temkin2017} and purity for Brauer groups \cite[Theorem 3.5.4]{ColliotTheleneSkorobogatov}. Note it holds in particular when $S$ is the spectrum of a field, a discrete valuation ring, or a number ring. 
	\end{remark}
	
	\begin{definition}\label{def:variation}
		Let $\cX \to Y$ be good moduli space with assumptions of Theorem \ref{thm:main-gms-splitting-rational-version}. If $g \colon Z \to \cX$ a morphism from an irreducible algebraic space of finite type over $S$, we define the variation $\Var g$ to be the dimension of the scheme theoretic image of the composition $f \colon Z \to Y$. 
	\end{definition}
	
	\begin{corollary}\label{cor:variation}
		Keeping the assumptions as in Theorem \ref{thm:main-gms-splitting-rational-version}, suppose $Y$ is separated over $S$ and $g \colon Z \to \cX$ is a morphism from an irreducible separated algebraic space over $S$. Then there exists an alteration $Z' \to Z$, a dominant morphism $Z' \to Z''$ and commutative diagram
		$$
		\xymatrix{Z' \ar[r] \ar[d] & Z'' \ar[d]^{g''} \\ Z \ar[r]_g & \cX}
		$$
		such that $\Var g = \Var g'' = \dim Z''$.
	\end{corollary}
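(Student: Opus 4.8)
The plan is to reduce to Theorem \ref{thm:main-gms-splitting-rational-version} by factoring $g$ through its ``moduli-theoretic Stein factorization.'' First I would let $W \subseteq Y$ be the scheme-theoretic image of the composite $f = \pi g \colon Z \to Y$, so by definition $\dim W = \Var g$; replacing $Y$ by $W$ we may assume $f$ is dominant and $Y$ is integral of dimension $\Var g$. Since $Z$ is irreducible and separated over $S$, the generic point $\eta \in Z$ has image $g(\eta) \in \cX$ lying over the generic point of $Y$. After a suitable dominant alteration of $Z$ (which does not change any of the relevant dimensions) we may assume by Remark \ref{rmk:properlystable-factorization}(iii) and Temkin that $Z$ is regular, and—using that the polystable locus is dense in each fiber of $\pi$ (Lemma \ref{lem:ps})—that $g(\eta)$ lies in $\cX^{\ps}$; this is exactly the hypothesis needed to invoke Theorem \ref{thm:main-gms-splitting-rational-version}.

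Next I would construct $Z''$. Take the Stein factorization $Z \to \overline{Z} \to Y$ of $f$ (this exists since $f$ is of finite type between algebraic spaces once we have arranged properness, or after a further alteration making $Z$ proper over $Y$—here I would blow up and normalize, invoking Chow's lemma as in Remark \ref{rmk:properlystable-factorization}(ii)), so that $\overline{Z} \to Y$ is finite and $Z \to \overline{Z}$ has geometrically connected generic fiber. Then $\dim \overline{Z} = \dim Y = \Var g$. The idea is to set $Z'' = \overline{Z}$ and produce the lift $g'' \colon \overline{Z} \to \cX$ by applying Theorem \ref{thm:main-gms-splitting-rational-version} to the finite, hence proper, morphism $\overline{Z} \to Y$ together with the rational map $\overline{Z} \dashrightarrow \cX$ induced by $g$ over the generic point: since $Z \to \overline{Z}$ is dominant with connected generic fiber and $g(\eta) \in \cX^{\ps}$, the map $g$ descends rationally to $\overline{Z}$ (two points of $Z$ over the same point of $\overline{Z}$ near the generic fiber have $\mathbf{S}$-equivalent, hence equal polystable, images). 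Theorem \ref{thm:main-gms-splitting-rational-version} then yields an alteration $Z'' \to \overline{Z}$—which we rename $Z''$—and a morphism $g'' \colon Z'' \to \cX$ over $Y$ agreeing generically with the descended rational map. Pulling the alteration $Z'' \to \overline{Z}$ back along $Z \to \overline{Z}$ and taking a dominant component gives the alteration $Z' \to Z$ together with a dominant map $Z' \to Z''$ and the commuting square, where $Z' \to \cX$ is forced to agree generically with $g$ (again by polystability of the generic image and $\mathbf{S}$-equivalence). Finally $\Var g'' = \dim(\text{image of } Z'' \to Y) = \dim \overline{Z} = \Var g = \dim Z''$, the middle equality because $Z'' \to Y$ factors through the finite map $\overline{Z} \to Y$ which is dominant, and the last because $Z''$ is an alteration of $\overline{Z}$.

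The main obstacle I anticipate is making precise the descent of the rational map $g$ along the Stein factorization $Z \to \overline{Z}$: a priori $g$ is only a morphism from $Z$, not from $\overline{Z}$, and one must check that on a dense open of $\overline{Z}$ the fibers of $Z \to \overline{Z}$ are sent by $g$ into a single $\mathbf{S}$-equivalence class of $\cX$ and that this class is represented by a point of $\cX^{\ps}$ varying algebraically—i.e., that the induced set-theoretic map $\overline{Z} \dashrightarrow |\cX^{\ps}|$ is in fact a rational morphism of stacks. This should follow from the fact that $\cX \to Y$ is a good moduli space, so that two points with the same image in $Y$ and lying in $\cX^{\ps}$ coincide, combined with flatness/generic smoothness of $Z \to \overline{Z}$ to spread the pointwise statement out to an open; but it requires care, and it is the reason the hypotheses of Theorem \ref{thm:main-gms-splitting-rational-version} (regular alterations, the polystable-locus machinery) are genuinely used rather than just the cruder Theorem \ref{thm:main-gms-splitting}.
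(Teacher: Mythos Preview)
Your approach has a real gap and is substantially more elaborate than what the paper does.

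The gap is in the first paragraph: an alteration of $Z$ does not move the image of the generic point in $|\cX|$, so you cannot ``arrange'' that $g(\eta)\in\cX^{\ps}$ that way. Moreover, Lemma~\ref{lem:ps} does not say the polystable locus is dense in each fiber of $\pi$; the polystable point is the unique \emph{closed} point of the fiber. If $g(\eta)$ is not already polystable, no alteration of $Z$ will make it so, and then the hypothesis of Theorem~\ref{thm:main-gms-splitting-rational-version} is simply unavailable. This cascades into the second paragraph: your descent of $g$ along the Stein factorization rests on the generic fiber of $Z\to\overline{Z}$ landing at a single polystable point, which you have not established, and even set-theoretic constancy on fibers would not by itself produce a rational map of stacks $\overline{Z}\dashrightarrow\cX$.

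The paper avoids all of this by invoking only Theorem~\ref{thm:main-gms-splitting}, not the rational version. After replacing $\cX$ by $\cX\times_Y W$ (with $W\subset Y$ the scheme-theoretic image of $Z$), it applies Theorem~\ref{thm:main-gms-splitting} directly to $Y$ to obtain an alteration $Y'\to Y$ with a lift $f'\colon Y'\to\cX$, and sets $Z'':=Y'$, $g'':=f'$; then $Z'$ is a component of $Z\times_Y Y'$ dominating both factors. No Stein factorization, no descent, and no polystability hypothesis on $g$ is needed. Note that in the paper's construction the two composites $Z'\to Z\to\cX$ and $Z'\to Z''\to\cX$ agree only after composing with $\pi\colon\cX\to Y$; the square is not verified to commute on the nose at $\cX$, and the conclusions $\Var g=\Var g''=\dim Z''$ do not require it to.
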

	
	\subsection{Generalization of Koll\'ar's Ampleness Lemma:~projectivity of good moduli spaces}
	
	Our main application generalizes Koll\'ar's celebrated ampleness lemma \cite[Lemma 3.9]{KollarAmpleness} \& \cite[Theorem 4.5]{KovacsPatakfalvi2017} to the good moduli space of an Artin stack. Our result gives a criterion for a good moduli space to be projective. We assume in this subsection that $S = \Spec k$ is a field since ampleness of a line bundle on $Y$ over $S$ can be checked fiberwise over $S$.

	\begin{definition}\label{def:semiposvb}
		A finite rank vector bundle $\cE$ on a stack $\cX$ is \emph{semi-positive} (resp.~\emph{weakly semi-positive}) if for every map $f\colon C\to\cX$ from a smooth curve (resp.~whose generic point maps to the polystable locus $\cX^{\ps}$), and every surjection $f^*\cE\twoheadrightarrow\cL$ with $\cL$ a line bundle, we have $\deg(\cL)\geq0$.
	\end{definition}
	
	\begin{remark}\label{rmk:semiposvbiffnef}
		If $\cX$ is an algebraic space, then $\cE$ is semi-positivity if and only if it is nef, i.e., $\cO(1)$ on $\bP_\cX(\cE)$ is nef. This is because every quotient line bundle of $f^*\cE$ is the pullback of $\cO(1)$. When $\cX$ is a stack, however, there is no natural definition of nefness, and hence we instead consider the semi-positivity condition defined above. We further remark that weak semi-positivity is equivalent to semi-positivity when $\cX^{\ps}=\cX$, e.g., for gerbes over separated stacks.
	\end{remark}

	Let $\cE$ be a vector bundle of rank $n$ on a stack $\cX$ with structure group $\rho\colon G\to\GL_n$, and let $\alpha\colon\cE \twoheadrightarrow \cQ$ be a surjection to a locally free sheaf of rank $q$. Then we have an associated \emph{classifying map}
	\[
	[\alpha]\colon\cX\to[\Gr(n,q)/G]
	\]
	where $\Gr(n,q)$ is the Grassmannian. 
	
	\begin{theorem}\label{thm:kollar-ampleness}
		Let $\pi\colon\cX\to Y$ be a good moduli space with $Y$ proper (equivalently, by \cite[Theorem A]{AHLH}, $\cX$ is $\Theta$-reductive, $\textbf{S}$-complete, and satisfies the existence part of the valuative criterion). Let $\cE$ be a finite rank vector bundle and $\alpha\colon\cE\twoheadrightarrow\cQ$ a surjection to a locally free sheaf. Assume
		\begin{enumerate}
			\item there exists $N>0$ such that $\det(\cQ)^{\otimes N}$ descends to a line bundle $\cL$ on $Y$,
			\item the fibers of $[\alpha]$ contain finitely many closed points, and
			\item one of the following holds:
			\begin{enumerate}
				\item $\cE$ is weakly semi-positive with linearly reductive structure group $G$, or
				\item\label{kollar::Schur-positive} $\cE=\rho(\cE_1,\dots,\cE_k)$ where the $\cE_i$ are weakly semi-positive vector bundles of rank $n_i$, $G=\prod_i\GL_{n_i}$ and $\rho$ is a semi-positive polynomial representation, or
				\item for every map $f\colon C\to\cX$ from a curve whose generic point maps to the polystable locus $\cX^{\ps}$, $f^*\cE\simeq\cE'\otimes\cM$, where $\cM$ has degree $0$ and $\cE'$ is of the form (\ref{kollar::Schur-positive}).
			\end{enumerate}
		\end{enumerate}
		Then $\cL$ is ample, hence $Y$ is projective.
	\end{theorem}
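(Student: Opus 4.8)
The plan is to deduce ampleness of $\cL$ in two stages: first that $\cL$ is nef, then that it is ample, in both cases transporting the problem to an honest scheme where one can use curves and, ultimately, the classical ampleness lemma of Koll\'ar and Kov\'acs--Patakfalvi. Throughout we may assume $Y$ is integral, since ampleness and nefness of $\cL$ can be checked on $Y_{\red}$ and on its irreducible components.

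For nefness, let $\nu\colon C\to Y$ be a morphism from a smooth proper curve with $1$-dimensional image $\bar C$ (if the image is a point there is nothing to check). The fibre of $\pi$ over the generic point of $\bar C$ has a unique closed point, and this point lies in $\cX^{\ps}$; after a finite extension of the function field of $\bar C$ it becomes rational, so spreading out and normalizing yields a finite surjective morphism $g\colon C'\to C$ from a smooth proper curve together with a lift $h\colon C'\to\cX$ of $\nu g$ whose generic point maps into $\cX^{\ps}$ (one may instead apply Theorem~\ref{thm:main-gms-splitting} with $Z=C$ and use Remark~\ref{rmk:properlystable-factorization}(i)). Since the generic point of $C'$ maps to $\cX^{\ps}$, weak semi-positivity of $\cE$ in any of the three forms forces $h^*\cE$ to be nef as a vector bundle on $C'$: in form (3a) this is the definition applied to quotient line bundles; in form (3b) it follows because a semi-positive polynomial representation carries nef bundles to nef bundles; in form (3c) one twists by the degree-$0$ factor $\cM$. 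Consequently $\Lambda^q h^*\cE$ is nef, its quotient line bundle $\det h^*\cQ=h^*\det\cQ$ has non-negative degree, and so $\deg\bigl((\nu g)^*\cL\bigr)=N\cdot\deg(h^*\det\cQ)\ge 0$. Dividing by $\deg g>0$ gives $\deg(\nu^*\cL)\ge 0$, so $\cL$ is nef.

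For ampleness, apply Theorem~\ref{thm:main-gms-splitting} with $Z=Y$ and $f=\id$, using Remark~\ref{rmk:properlystable-factorization}(ii)--(iii) (valid since $S=\Spec k$), to obtain an alteration $g\colon Y'\to Y$ with $Y'$ a regular projective integral scheme over $k$ and a lift $h\colon Y'\to\cX$ of $g$ which factors through $\overline{\cX^{\ps}}$, where by construction a dense open of $Y'$ maps into $\cX^{\ps}$. Put $\cE'=h^*\cE$, $\alpha'=h^*\alpha\colon\cE'\twoheadrightarrow\cQ'=h^*\cQ$; then $\det(\cQ')^{\otimes N}=h^*\bigl(\det(\cQ)^{\otimes N}\bigr)=g^*\cL$, and the classifying map of $\alpha'$ is $[\alpha']=[\alpha]\circ h\colon Y'\to[\Gr(n,q)/G]$. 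I would now invoke the classical ampleness lemma (\cite[Lemma~3.9]{KollarAmpleness}, \cite[Theorem~4.5]{KovacsPatakfalvi2017}) on the scheme $Y'$ to conclude that $\det\cQ'$, hence $g^*\cL$, is ample: its positivity input is met because $\cE'$ is nef on the dense open of $Y'$ mapping into $\cX^{\ps}$ (by the curve argument above), and in any case $\det\cQ'=g^*\cL$ is already nef, while its finiteness input is that $[\alpha']$ has finite fibres. Granting this, $g$ is proper, surjective and $g^*\cL$ is ample, so $g$ contracts no complete curve, hence is quasi-finite and therefore finite; ampleness descends along finite surjective morphisms, so $\cL$ is ample and $Y$ is projective.

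The step I expect to be the main obstacle is verifying that $[\alpha']\colon Y'\to[\Gr(n,q)/G]$ has finite fibres. Hypothesis (2) only says that each fibre of $[\alpha]$ contains finitely many closed points, while the lift $h$ is far from quasi-finite, as $\pi$ collapses every $\textbf{S}$-equivalence class. A positive-dimensional fibre of $[\alpha']$ would produce a positive-dimensional closed subvariety of $\overline{\cX^{\ps}}$ inside a single fibre of $[\alpha]$; excluding this should follow by combining hypothesis (2) with properties of the good moduli space (universal closedness of $\pi$ and uniqueness of the polystable point of each fibre), e.g.\ by showing that $[\alpha]|_{\cX^{\ps}}$ is quasi-finite onto its image after passing to $G$-quotients. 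If a direct analysis on $Y'$ is unwieldy, an alternative is to bypass this reduction and prove $\cL^{\dim V}\cdot V>0$ for every integral closed $V\subseteq Y$ by descending $G$-invariant sections of powers of the Pl\"ucker bundle on $\Gr(n,q)$, invoking GIT and hypothesis (2); together with nefness of $\cL$, Nakai--Moishezon then gives ampleness.
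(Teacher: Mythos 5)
Your nefness argument is fine, but the ampleness half has a genuine gap, and it is not merely the finiteness verification you flag at the end. The alteration $g\colon Y'\to Y$ produced by Theorem~\ref{thm:main-gms-splitting} is built from sequences of blow-ups (via \cite{ER}) and finite covers, so in general it has positive-dimensional fibres. On any curve $C'\subset Y'$ contracted by $g$ one has $g^*\cL\cdot C'=0$, so $g^*\cL=\det(\cQ')^{\otimes N}$ is simply \emph{not} ample on $Y'$; the intermediate statement you want to extract from the classical ampleness lemma is false, and correspondingly $[\alpha']$ genuinely fails to be finite (it is only generically finite). Your subsequent deduction that $g$ contracts no curves, hence is finite, is then circular: it would only follow from the ampleness of $g^*\cL$ that you cannot establish. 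A second, smaller gap: even where the ampleness lemma applies, its positivity input is weak positivity of the full bundle $\cE'$ on all of $Y'$ (in Viehweg's sense), not nefness of $\cE'$ over a dense open or nefness of $\det\cQ'$; condition (3) only controls curves whose generic point maps to $\cX^{\ps}$, so some argument is needed to promote this to global weak positivity.

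The correct route is the one you sketch in your final sentence but do not execute, and it is what the paper does. One applies Nakai--Moishezon directly on $Y$: for each integral closed $Z\subseteq Y$ of dimension $d$, Theorem~\ref{thm:main-gms-splitting} applied to $Z$ (not to $Y$) gives an alteration $W\to Z$ with a lift $s\colon W\to\cX$ whose generic point lands in $\cX^{\ps}$. One checks that $[s^*\alpha]=[\alpha]\circ s$ is set-theoretically \emph{generically} finite, using hypothesis (2) together with the bijection $\cX^{\ps}(\overline k)\to Y(\overline k)$ of Lemma~\ref{lem:ps}(iii). One then shows $s^*\cE$ is weakly positive on all of $W$ by proving $c_1(\cO_{\bP(s^*\cE)}(1))$ is pseudoeffective: condition (3) gives non-negativity on lifts of curves meeting the dense open $U\subset W$ mapping into $\cX^{\ps}$, hence on all movable curve classes, and one concludes by \cite{bdpp} (and its positive-characteristic analogue). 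Finally one invokes the Kov\'acs--Patakfalvi form of the ampleness lemma \cite[Theorem 4.5]{KovacsPatakfalvi2017}, which replaces finiteness of the classifying map by generic finiteness and concludes only that $\det(s^*\cQ)$ is \emph{big and nef} --- but that already gives $(f^*i^*\cL)^d>0$, hence $(i^*\cL)^d>0$ by the projection formula, which is all Nakai--Moishezon requires. Your separate nefness step then becomes unnecessary.
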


	\acknowledgements It is a pleasure to thank Kenny Ascher, Brian Conrad, Dan Edidin, Jack Hall, Giovanni Inchiostro, Sam Molcho, and Jerry Wang for helpful conversations. We would especially like to thank Sid Mathur for his suggestion to consider the stack $\sI$ in the proof of Proposition \ref{prop:gerbe-abelian}.

	\section{Preliminary reductions for gerbes}
	
	Recall that a group scheme $G\to S$ is \emph{linearly reductive} if $BG \to S$ is a good moduli space. It is \emph{reductive} if it is smooth and affine, and the geometric fibers are connected reductive groups. Note that linearly reductive group schemes need not be smooth nor connected. 
	Consider the functor $Z(G)$ whose $T$-points are given by the center of $G(T)$.
	Theorem 3.3.4 of \cite{ConradReductive} shows that $Z(G)$ is representable by a closed subgroup scheme $Z(G)\subset G$, that $Z(G)\to S$ is flat, the formation of $Z(G)$ commutes with base change on $Y$, and that $Z(G)\to S$ is of multiplicative type; note that $Z(G)\to S$ need not be smooth nor connected.
	
	\begin{proposition}\label{prop:gerbe-abelian}
		Let $Y$ be an algebraic space and $G\to Y$ a reductive group scheme. Suppose that for all gerbes $\mathcal{Z} \to Y$ banded by $Z(G)$, there exists an alteration $Y' \to Y$ and a lift $Y' \to \mathcal{Z}$. Then the same holds for all gerbes $\cX \to Y$ banded by $G$. 
		
	\end{proposition}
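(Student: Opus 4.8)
The plan is to compare $\cX$ with its image under the central quotient $G\twoheadrightarrow\overline{G}:=G/Z(G)$. Since $G$ is reductive, $Z(G)\subset G$ is a flat closed normal subgroup scheme of multiplicative type (recalled above), so the fppf quotient $\overline{G}\to Y$ is a reductive group scheme of \emph{adjoint} type, and we obtain a central extension $1\to Z(G)\to G\to\overline{G}\to 1$ over $Y$ (see \cite{ConradReductive}). Pushing $\cX$ forward along $G\to\overline{G}$ (extension of band) produces a gerbe $\overline{\cX}\to Y$ banded by $\overline{G}$ together with a morphism $q\colon\cX\to\overline{\cX}$ over $Y$ lying over the homomorphism $G\to\overline{G}$.

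First I would show $\overline{\cX}$ is neutral. To this end, consider the stack $\sI:=\uIsom_Y(B\overline{G},\overline{\cX})$ of band-preserving equivalences. It is fppf-locally nonempty on $Y$ (where both source and target are trivial gerbes), and one checks it is a gerbe over $Y$ banded by $Z(\overline{G})$. Since $\overline{G}$ is adjoint, $Z(\overline{G})=1$, so $\sI\to Y$ is an equivalence; a section of $\sI$ is an equivalence $B\overline{G}\xrightarrow{\ \sim\ }\overline{\cX}$, so $\overline{\cX}$ is neutral, and in particular it admits a section $s\colon Y\to\overline{\cX}$. Next, working fppf-locally on $Y$ the map $q$ becomes the canonical projection $BG\to B\overline{G}$, whose fibers are copies of $BZ(G)$; hence $q$ is a gerbe over $\overline{\cX}$ banded by (the pullback of) the abelian group scheme $Z(G)$. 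Consequently
\[
\cZ\;:=\;Y\times_{s,\,\overline{\cX}}\cX
\]
is a gerbe over $Y$ banded by $Z(G)$, carrying a projection $p\colon\cZ\to\cX$; and since $s$ is a section of $\overline{\cX}\to Y$, the composite $\pi\circ p\colon\cZ\to Y$ is the structure morphism of $\cZ$.

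It then remains to apply the hypothesis to $\cZ$: it furnishes an alteration $g\colon Y'\to Y$ together with a lift $\sigma\colon Y'\to\cZ$ of $g$, and then $p\circ\sigma\colon Y'\to\cX$ satisfies $\pi\circ p\circ\sigma=g$, which is precisely the desired lift of the alteration along the $G$-gerbe $\cX$. The main obstacle I anticipate lies in the middle paragraph, carried out over a general algebraic space $Y$ and for a possibly non-constant reductive group scheme $G/Y$: namely, establishing cleanly that $\sI$ is a $Z(\overline{G})$-gerbe (hence trivial since $Z(\overline{G})=1$) and that $q$ is a $Z(G)$-gerbe — that is, making the extension of band and the fiber computation of $BG\to B\overline{G}$ work in the needed generality — after which the argument is formal.
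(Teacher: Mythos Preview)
Your argument is correct and rests on the same core fact as the paper's---that the stack of band-preserving equivalences between two $H$-banded gerbes is a $Z(H)$-gerbe---but deploys it differently. The paper applies this directly with $H=G$: it shows that $\uIsom_Y(\cX,BG)\to\uIsom_Y(\Band(\cX),\Band(G))$ is a $Z(G)$-gerbe via the fppf-local identification with $[\uIsom_{gp}(G_\cX,G)/G]\to\uIsom_{gp}(G_\cX,G)/G^{ad}$ and \cite[Cor.~XXIV~1.8]{SGA3}, then pulls back along the chosen band isomorphism $\varphi$ to obtain the $Z(G)$-gerbe $\sI=\uIsom_Y^\varphi(\cX,BG)$; a section of $\sI$ after an alteration yields an equivalence $\cX|_{Y'}\simeq BG_{Y'}$ and hence a section of $\cX|_{Y'}$. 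You instead pass through the adjoint quotient: rigidifying $\cX$ by $Z(G)$ produces $q\colon\cX\to\overline{\cX}$ banded by $\overline{G}=G/Z(G)$, and invoking the Isom-stack principle with $H=\overline{G}$ (where $Z(\overline{G})=1$) forces $\overline{\cX}\simeq B\overline{G}$; the fiber of $q$ over a section is then your $Z(G)$-gerbe $\cZ$. The obstacles you flag---representability of the Isom-stack as a center-gerbe over a general base with non-constant reductive $G$, and the $BZ(G)$-fibration structure of $q$---are precisely what the paper's local computation handles, so the same references (\cite[Cor.~XXIV~1.8]{SGA3}, \cite[Tag~06DB]{stacks-project}) discharge them. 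Your route adds the intermediate step of neutralizing $\overline{\cX}$ but makes the geometric picture---that $\cX$ sits over a neutral adjoint gerbe with $BZ(G)$-fibers---more transparent; the paper's one-shot construction of $\sI$ is shorter.
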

	\begin{proof}
		Let $\cX \to Y$ be a gerbe and fix an isomorphism $\varphi \colon \Band(\cX) \to \Band(G)$. We produce a $Z(G)$-gerbe associated to $\cX$ by a generalization of the argument in the proof of \cite[Theorem 58]{sid}. Consider the morphism of stacks over $Y$ 
		\begin{equation}\label{eqn:isom}
			\uIsom_Y(\cX, BG) \to \uIsom_Y(\Band(\cX), \Band(G)).
		\end{equation}
		Pulling back to an fppf cover $U \to Y$ where $\cX|_U = BG_{\cX}$, the above morphism becomes 
		$$
		[\uIsom_{gp/U}(G_{\cX}, G)/G] \to \uIsom_{gp/U}(G_{\cX},G)/G^{ad}
		$$
		where $G^{ad} = G/Z(G)$. This morphism is representable by algebraic stacks by \cite[Cor. XXIV 1.8]{SGA3} and is a gerbe for $Z(G)$. Thus by \cite[Tag 06DB]{stacks-project}, the morphism in (\ref{eqn:isom}) is also representable by algebraic stacks and a $Z(G)$-gerbe. The choice of isomorphism $\varphi$ gives a global section of $\uIsom_Y(\Band(\cX), \Band(G))$ and pulling back along (\ref{eqn:isom}) gives us a $Z(G)$-gerbe $\sI :=\uIsom_Y^{\varphi}(\cX, BG) \to X$ of banded equivalences between $BG$ and $\cX$.
		
		Suppose that there exists an alteration $Y' \to Y$ and a lift $Y' \to \sI$. Then $Y' \to \sI$ gives an isomorphism $BG_{Y'} \to \cX|_{Y'}$. Since the former has a section, so does the latter and thus by composition we have a lift $Y' \to \cX$. 
		
	\end{proof}

	The next result will be useful to reduce the case of gerbes to the special case of gerbes with connected stabilizers. Theorem 9.9 of \cite{AHRetalelocal} proves that if $G\to S$ is a flat separated group algebraic space with affine fibers such that the connected component of the identity of each fiber $(G_s)^0$ is linearly reductive, then there exists a unique linearly reductive characteristic closed subgroup $G^0_{\sm}\hookrightarrow G$ which is smooth over $S$, restricts to $(G_s)^0_{\mathrm{red}}$ on all fibers, and such that $G/G^0_{\sm}$ is quasi-finite and separated over $S$. 
	
	\begin{remark}\label{rmk:G0smoverX}
		We note that the construction of $G^0_{\sm}$ commutes with base change. Indeed, it suffices to check this   when $S=\Spec\kappa$ is a field since $G^0_{\sm}$ is characterized by a fiberwise condition \cite[Theorem 9.9]{AHRetalelocal}. 
		Then we have $G^0_{\sm}=G^0_{\red}$ which is smooth over 
		$\kappa$, hence geometrically reduced and geometrically irreducible (equivalently geometrically connected); thus, $G^0_{\sm}\times_\kappa \kappa'=(G\times_\kappa \kappa')^0_{\sm}$. 
		In particular, by descent, we see that if $G\to\cX$ is a flat separated group algebraic space such that for all field valued points $x\colon\Spec k\to\cX$, the stabilizer $G_x$ is affine and its connected component $(G_x)^0$ is linearly reductive, then there exists a unique linearly reductive characteristic closed subgroup $G^0_{\sm}\hookrightarrow G$ which is smooth over $\cX$, restricts to $(G_x)^0_{\mathrm{red}}$ on all field valued points, and $G/G^0_{\sm}$ is quasi-finite and separated over $\cX$.
	\end{remark}

	\begin{proposition}\label{prop:gerbe-connected}
		Let $\pi\colon\cX\to Y$ be a good moduli space gerbe where $\cX$ is a finitely presented Artin stack with affine diagonal over a scheme $S$. Letting $\overline{\cX}\colon=\cX\thickslash(I\cX)^0_{\sm}$ be the rigidification, $\pi$ factors as $\cX\xrightarrow{\gamma}\overline{\cX}\xrightarrow{\overline{\pi}} Y$, where $\gamma$ is a good moduli gerbe for $(I\cX)^0_{\sm}$ and $\overline{\pi}$ a tame coarse moduli space.
	\end{proposition}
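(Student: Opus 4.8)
The plan is to take $\overline{\cX}$ to be the rigidification $\cX\thickslash(I\cX)^0_{\sm}$ afforded by Remark~\ref{rmk:G0smoverX}, and then to identify the two resulting factors using fppf descent for the good moduli space condition together with the structure theory of tame stacks.

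First I would check that Remark~\ref{rmk:G0smoverX} applies to the inertia $I\cX\to\cX$. This group is flat, affine, and finitely presented over $\cX$ because $\cX$ is a gerbe with affine diagonal, and its geometric stabilizers are linearly reductive: $\pi$ is a good moduli space, so after base change $\cX_{\kappa(y)}\to\Spec\kappa(y)$ is again one for each $y\in Y$, and since $\cX$ is a gerbe the geometric stabilizers over $y$ are all a common form of a linearly reductive group. Over a field a linearly reductive group has identity component reductive (in characteristic $0$) or of multiplicative type (in characteristic $p$), hence linearly reductive either way. Thus Remark~\ref{rmk:G0smoverX} produces a linearly reductive, smooth, characteristic---hence normal---closed subgroup $(I\cX)^0_{\sm}\hookrightarrow I\cX$ with $I\cX/(I\cX)^0_{\sm}$ quasi-finite and separated over $\cX$. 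Normality makes $(I\cX)^0_{\sm}$ admissible for rigidification, so $\overline{\cX}:=\cX\thickslash(I\cX)^0_{\sm}$ exists, $\gamma$ is a gerbe for $(I\cX)^0_{\sm}$ with $I\overline{\cX}=I\cX/(I\cX)^0_{\sm}$, and---since $Y$ is an algebraic space and so has trivial automorphism groups---the universal property of rigidification factors $\pi$ uniquely through $\gamma$, giving $\overline{\pi}$.

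Next I would show $\gamma$ is a good moduli space morphism. Pulling back along an fppf cover $U\to\overline{\cX}$ that trivializes the gerbe, $\gamma$ becomes $BG'\to U$ for a linearly reductive group $G'$ over $U$, which is a good moduli space; since the good moduli space property descends along fppf covers of the target, $\gamma$ is a good moduli gerbe for $(I\cX)^0_{\sm}$. Then $\overline{\pi}$ is a good moduli space: for quasi-coherent $\cF$ on $\overline{\cX}$, exactness of $\gamma_*$ and the projection formula give $\gamma_*\gamma^*\cF=\cF$, whence $\overline{\pi}_*\cF=\pi_*\gamma^*\cF$; as $\gamma^*$ is exact ($\gamma$ being flat) and $\pi_*$ is exact, $\overline{\pi}_*$ is exact, and $\overline{\pi}_*\cO_{\overline{\cX}}=\pi_*\cO_\cX=\cO_Y$. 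Finally, to upgrade $\overline{\pi}$ to a tame coarse moduli space: the stabilizers of $\overline{\cX}$ are the quotients $(G_x)/(G_x)^0_{\red}$, which are finite and linearly reductive, so $I\overline{\cX}\to\overline{\cX}$ is a flat, finitely presented, separated, quasi-finite group with finite linearly reductive fibers; such a group is in fact finite, by the structure theory of linearly reductive group schemes, so $\overline{\cX}$ has finite inertia and linearly reductive stabilizers, i.e.\ is a tame stack. Its coarse space then exists and is itself a good moduli space, so by uniqueness of good moduli spaces it is $Y$, and $\overline{\pi}$ is the tame coarse moduli space.

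The main obstacle I anticipate is the final passage from quasi-finite to finite inertia on $\overline{\cX}$: one has to use linear reductivity of $I\overline{\cX}\to\overline{\cX}$ in an essential way (the fiber rank of a quasi-finite flat group can otherwise drop, e.g.\ for a ``partially rigidified'' $B(\bZ/2)$), which is exactly what fails for a merely quasi-finite, non-proper candidate inertia and would leave $\overline{\cX}$ without a coarse space. A lesser technical point is ensuring the relative construction of $(I\cX)^0_{\sm}$ over $\cX$ from Remark~\ref{rmk:G0smoverX} yields a genuine subgroup stack of $I\cX$ suitable for rigidification, and arranging the fppf-local trivialization in the previous step so that descent of the good moduli space property applies directly.
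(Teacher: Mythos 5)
Your construction and the first two-thirds of your argument coincide with the paper's proof: you form $\overline{\cX}=\cX\thickslash(I\cX)^0_{\sm}$ via Remark~\ref{rmk:G0smoverX}, check $\gamma$ is a good moduli gerbe by trivializing over an fppf cover of $\overline{\cX}$, and deduce the good moduli space property of $\overline{\pi}$ from exactness and the Stein property of $\gamma$ and $\pi$. The gap is exactly where you suspected it would be, and your proposed repair does not close it. You claim that a flat, finitely presented, separated, quasi-finite group algebraic space with finite \emph{linearly reductive} fibers is finite ``by the structure theory of linearly reductive group schemes.'' This is false: over $\bA^1$ in characteristic $0$, let $G\subset(\bZ/2)_{\bA^1}$ be the open subgroup scheme obtained by deleting the non-identity point over the origin. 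Every fiber is a finite constant group in characteristic $0$, hence linearly reductive, yet $G$ is quasi-finite, flat, separated and not finite. So linear reductivity of the fibers is precisely \emph{not} the hypothesis that prevents the fiber rank from dropping; your own ``partially rigidified $B(\bZ/2)$'' example already has linearly reductive fibers. No fiberwise condition can do the job here.

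The ingredient you are missing is a global one coming from the good moduli space hypothesis: by \cite[Theorem 4.1]{AHLH}, a stack admitting a good moduli space has \emph{unpunctured inertia}, and this property is inherited by $\overline{\cX}$ from $\cX$ directly from its definition. Then \cite[Proposition 3.54]{AHRetalelocal} upgrades quasi-finite separated unpunctured inertia to finite inertia, which is how the paper concludes that $\overline{\cX}$ is a tame stack with coarse space $Y$. (Unpuncturedness is exactly the statement that connected components of the inertia over a local base cannot disappear at the closed point, i.e.\ it rules out the example above; it is a consequence of $\Theta$-reductivity/S-completeness-type properties of good moduli spaces, not of linear reductivity of individual stabilizers.) Once this substitution is made, the rest of your argument --- finite inertia plus the already-established good moduli space property of $\overline{\pi}$ gives a tame coarse space --- goes through as in the paper.
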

	\begin{proof}
		First, $(I\cX)^0_{\sm}$ exists by Remark \ref{rmk:G0smoverX} and since $I\cX\to\cX$ is an affine linearly reductive group scheme.
		
		By \cite[Tag 06QJ]{stacks-project}, $I\cX\to\cX$ is flat and locally finitely presented, hence finitely presented as $\pi$ is. Since $(I\cX)^0_{\sm}$ is smooth over $\cX$ (hence flat), and a characteristic subgroup (hence normal) in $I\cX$, the rigidification $\overline{\cX}$ exists by \cite[Theorem A.1]{AbramovichOlssonVistoliTame}. 
		Thus, we may factor $\pi$ as $\cX\xrightarrow{\gamma}\overline{\cX}\xrightarrow{\overline{\pi}} Y$, where $\gamma$ and $\overline{\pi}$ are both gerbes. Furthermore, $\gamma$ is a good moduli map because on an fppf cover $V\to\overline{\cX}$ by an algebraic space, we have $\cX\times_{\overline{\cX}}V=B(I(\cX|_V)^0_{\sm})$ and $I(\cX|_V)^0_{\sm}$ is linearly reductive by \cite[Theorem 9.9]{AHRetalelocal}. This implies $\overline{\pi}$ is a good moduli space as both the cohomological affine and Stein properties of $\overline{\pi}$ follow immediately from those of $\gamma$ and $\pi$.
		
		By \cite[Remark A.2]{AbramovichOlssonVistoliTame}, $\overline{G}\colon=I\cX/(I\cX)^0_{\sm}$ descends to the inertia $I\overline{\cX} \to \overline{\cX}$. By Remark \ref{rmk:G0smoverX} as well as \cite[Theorem 9.9]{AHRetalelocal}, $\overline{G}\to\cX$ is quasi-finite and separated, so $\overline{\cX}$ has quasi-finite separated inertia. Since $\pi$ is a good moduli space, $\cX$ has unpunctured inertia \cite[Theorem 4.1]{AHLH}. From the definition of unpunctured inertia, this implies $\overline{\cX}$ does as well. Since $\overline{\cX}$ has quasi-finite unpunctured inertia, \cite[Proposition 3.54]{AHRetalelocal} tells us that $\overline{\cX}$ has finite inertia. Combined with the fact that $\overline{\pi}$ is a good moduli space, we see $\overline{\pi}$ is a tame coarse space space.
	\end{proof}

	\begin{corollary}\label{cor:gerbe-conn-split}
		In the situation of Proposition \ref{prop:gerbe-connected}, if we further assume $S$ is Noetherian, then there exists a finite cover $Z \to Y$ and a lift $Z \to \overline{\cX}$.  
	\end{corollary}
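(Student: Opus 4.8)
My plan is to deduce this from the existence of finite surjective covers of stacks with finite inertia by schemes. By Proposition \ref{prop:gerbe-connected}, $\overline{\pi}\colon\overline{\cX}\to Y$ is a tame coarse moduli space; in particular $\overline{\cX}$ is a tame Artin stack with finite inertia, of finite type over the Noetherian scheme $S$, $\overline{\pi}$ is proper, and $\overline{\cX}$ has finite (hence quasi-finite and separated) diagonal. Note that producing a finite cover $Z\to Y$ together with a lift $Z\to\overline{\cX}$ is the same as producing a finite cover of $Y$ over which the gerbe $\overline{\cX}$ becomes neutral. Also, since the lift $Z\to\overline{\cX}$ is not required to be finite, it suffices to find \emph{any} finite surjective morphism $g\colon W\to\overline{\cX}$ from a scheme (or algebraic space) and then check that $\overline{\pi}\circ g\colon W\to Y$ is finite.

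For the existence of $g$: when $\overline{\cX}$ is Deligne--Mumford this is \cite[Theorem 16.6]{LMB} (see also \cite[Theorem 2.7]{EHKV}). In general $\overline{\cX}$ need not be Deligne--Mumford --- in positive characteristic its inertia can be, e.g., $\mu_p$ --- but one can still produce $g$ using the local structure of tame Artin stacks: \'etale-locally on $Y$ the stack $\overline{\cX}$ is a quotient $[\Spec B/G]$ with $G$ a finite flat linearly reductive group scheme, and then $\Spec B\to\overline{\cX}$ is finite, flat, surjective, and moreover finite over $Y$ since $\Spec B\to\Spec B^{G}$ is finite; one then globalizes these local finite covers. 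I expect this globalization to be the main technical point; it is presumably available in the literature on tame stacks, and it is the reason the Noetherian hypothesis on $S$ is imposed here. (When $\overline{\cX}$ is Deligne--Mumford none of this is needed.)

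Granting a finite surjective $g\colon W\to\overline{\cX}$ with $W$ a scheme, I finish as follows. The morphism $h:=\overline{\pi}\circ g\colon W\to Y$ is proper, being a composition of proper morphisms, and it is surjective since $g$ and $\overline{\pi}$ are. It is also quasi-finite: for $y\in Y$ the fiber $\overline{\cX}\times_{Y}\Spec\kappa(y)$ is a gerbe over $\kappa(y)$ (as $\overline{\pi}$ is a gerbe), hence zero-dimensional as an algebraic stack because the stabilizers are finite, and $g$ is finite, so $W\times_{Y}\Spec\kappa(y)$ is a finite $\kappa(y)$-scheme. A proper, quasi-finite morphism of Noetherian algebraic spaces is finite, so $h$ is finite; taking $Z:=W$ with the lift $g\colon Z\to\overline{\cX}$ completes the proof. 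Note the finiteness of the inertia of $\overline{\cX}$ enters twice: to obtain $g$, and to force the fibers of $h$ to be zero-dimensional.
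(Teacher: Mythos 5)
Your argument follows the same route as the paper: produce a finite surjective cover $Z\to\overline{\cX}$ by a scheme, then observe that $\overline{\pi}$ is proper (Keel--Mori) and quasi-finite, so the composite $Z\to Y$ is proper and quasi-finite, hence finite. The one place where you hedge --- the existence of the finite scheme cover when $\overline{\cX}$ is not Deligne--Mumford, which you flag as ``the main technical point'' and leave to a sketched local-to-global argument --- is not actually a gap in the literature: \cite[Theorem 2.7]{EHKV}, which you cite only parenthetically for the DM case, is stated for Noetherian algebraic stacks with quasi-finite separated diagonal, and a tame Artin stack with finite inertia (such as $\overline{\cX}$, e.g.\ with $\mu_p$ stabilizers in characteristic $p$) satisfies exactly that hypothesis. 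This is precisely how the paper proceeds, so no separate globalization of local presentations $[\Spec B/G]$ is needed. With that citation applied in its full generality, your proof is complete and coincides with the paper's.
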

	
	\begin{proof}
		By \cite[Theorem 2.7]{EHKV}, there exists a scheme $Z$ and a finite surjective morphism $Z \to \overline{\cX}$. Since $\overline{\cX} \to Y$ is a coarse moduli space, it is proper and quasi-finite so the composition $Z \to Y$ is finite. 
	\end{proof}
	
	\section{Proof of Theorem \ref{thm:main-gms-splitting}}
	
	Throughout this section, we use the running notation from the statement of Theorem \ref{thm:main-gms-splitting}. 
	By \cite[Theorem 6.1]{AHRetalelocal}, $\pi\colon\cX\to Y$ has affine diagonal and $Y\to S$ is finitely presented.
	
	\begin{lemma}\label{l:SNoeth}
		To prove Theorems \ref{thm:main-gms-splitting} and \ref{thm:main-gms-splitting-nonNoeth}, it suffices to prove Theorem \ref{thm:main-gms-splitting} where $Z\to Y$ is the identity map, $Y$ is irreducible and separated over $S$, and $S$ is finite type over a localization of $\Spec\bZ$.
	\end{lemma}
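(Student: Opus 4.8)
The plan is to carry out three reductions, applied in the order: (1) a Noetherian approximation, to replace $S$ by a scheme of finite type over a localization of $\bZ$ — and, in passing, to deduce Theorem~\ref{thm:main-gms-splitting-nonNoeth} from Theorem~\ref{thm:main-gms-splitting}; (2) a base change along $f$, to reduce to the case $Z=Y$, $f=\id$; and (3) a decomposition of $Y$ into its irreducible components. Reductions (2) and (3) are soft and use only that good moduli spaces, affine stabilizers, and separated relative diagonals are stable under base change; the substance is in (1), and specifically in propagating the good moduli space property through the approximation. That is the step I expect to be the main obstacle, since good moduli spaces are known not to interact well with limits in general (cf.\ \cite[\S7]{AHRetalelocal}).

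Reduction by base change. Given $f\colon Z\to Y$, I would set $\cX_Z:=\cX\times_Y Z$. Recalling that the formation of a good moduli space commutes with arbitrary base change, $\pi_Z\colon\cX_Z\to Z$ is again a good moduli space; it has affine stabilizers and separated diagonal over $Z$, these being pulled back from $\cX/Y$, and $\cX_Z\to S$ is of finite type since $\cX_Z\to\cX$ is the base change of the finite type morphism $f$. Moreover a morphism $Z'\to\cX_Z$ over $Z$ together with an alteration $Z'\to Z$ is the same datum, via composition with the projection $\cX_Z\to\cX$, as a commutative square of the form required in Theorem~\ref{thm:main-gms-splitting} for $\pi$ and $f$. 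Thus one may replace $(\pi,f)$ by $(\pi_Z,\id)$; note the new base $Z$ is separated over $S$, as required.

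Reduction to $Y$ irreducible. With $S$ Noetherian and $f=\id$, the algebraic space $Y$ has finitely many irreducible components; I would equip each, $Y_1,\dots,Y_r$, with its reduced (hence integral) structure, so that $\iota_i\colon Y_i\hookrightarrow Y$ is a closed immersion and $Y_i$ is separated over $S$. As in the previous paragraph, $\cX_i:=\cX\times_Y Y_i\to Y_i$ is a good moduli space satisfying all the hypotheses, with $Y_i$ irreducible and separated over $S$, so the special case of Theorem~\ref{thm:main-gms-splitting} furnishes an alteration $g_i\colon Y_i'\to Y_i$ and a lift $Y_i'\to\cX_i$. Set $Y':=\coprod_{i}Y_i'$, with $g\colon Y'\to Y$ given on $Y_i'$ by $\iota_i\circ g_i$. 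Then $g$ is proper and surjective; every irreducible component of $Y'$ lies in some $Y_i'$, hence dominates the component $Y_i$ of $Y$, and $g$ is finite over the generic points of $Y$ (which are the generic points of the $Y_i$), so $g$ is an alteration. Composing $Y_i'\to\cX_i$ with $\cX_i\to\cX$ yields the required lift $Y'\to\cX$ over $g$.

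Reduction by Noetherian approximation. Since $S$ is quasi-compact and quasi-separated, by absolute Noetherian approximation one may write $S=\lim_\lambda S_\lambda$ with each $S_\lambda$ of finite type over $\Spec\bZ$ and affine transition maps; by \cite{Rydh2015} the morphisms $\cX\to Y\to S$ (and $f$, and $Z\to S$) descend to a finitely presented situation over some $S_{\lambda_0}$ whose base change along $S\to S_{\lambda_0}$ is the original, and, enlarging $\lambda_0$, the properties ``affine stabilizers'', ``separated relative diagonal'', and ``relatively separated over the base'' descend by the usual limit formalism. The key point — and the main obstacle — is to arrange that $\pi_{\lambda_0}\colon\cX_{\lambda_0}\to Y_{\lambda_0}$ is a good moduli space: in the Noetherian setting this can be done directly, while in the setting of Theorem~\ref{thm:main-gms-splitting-nonNoeth} it is precisely here that one of the hypotheses $(\mathrm{FC})$, $(\mathrm{PC})$, $(\mathrm{N})$ is invoked, via \cite[\S15]{AHRetalelocal} and \cite{RydhNoetherianAbsolute}. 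Granting this, $S_{\lambda_0}$ is of finite type over $\bZ$, a fortiori over a localization of $\bZ$, and a solution of the problem over $S_{\lambda_0}$ base changes to one over $S$: properness and surjectivity of the alteration are stable under arbitrary base change, and generic finiteness persists as well (in the non-Noetherian case this recovers exactly the slightly weaker conclusion claimed in Theorem~\ref{thm:main-gms-splitting-nonNoeth}). This shows both theorems follow from Theorem~\ref{thm:main-gms-splitting} with $Z\to Y$ the identity, $Y$ irreducible and separated over $S$, and $S$ of finite type over a localization of $\bZ$.
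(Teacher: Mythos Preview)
Your proposal is correct and follows the same three reductions as the paper: base change along $f$ (\cite[Proposition 4.7]{Alper}), Noetherian approximation (\cite{Rydh2015}, \cite{RydhNoetherianAbsolute}), and passage to irreducible components. The only place the paper is more explicit is in the descent of the good moduli space property through the approximation, which is not ``direct'' in either setting but is handled uniformly: one uses \cite[Theorem 5.10]{RydhNoetherianAbsolute} to produce \emph{some} good moduli space $\cX_i\to X_i$, factors $\pi_i$ through it via the universal property \cite[Theorem 6.6]{Alper}, and then checks that $X_i\to Y_i$ becomes an isomorphism after base change to $S$, hence is already an isomorphism for large $i$ by \cite[Proposition B.3]{Rydh2015}.
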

	\begin{proof}
		
		First, by \cite[Proposition 4.7]{Alper}, the base change $\cX\times_YZ\to Z$ is a good moduli space; it is finitely presented with affine diagonal, so it suffices to assume $Z\to Y$ is the identity map.
		
		Let $S_0$ be the semi-localization of $\Spec\bZ$ in the characteristics appearing in $S$ if $S$ satisfies (FC), or otherwise let $S_0 = \Spec \bZ$. By \cite[Tag 07SU]{stacks-project}, we may write $S=\lim_i S_i$ where each $S_i$ is a quasi-separated and finite type scheme over $S_0$ and all transition maps $S_i\to S_j$ are affine. Then Propositions B.2 and B.3 of \cite{Rydh2015} imply there is a compatible family of finitely presented Artin stacks $\cX_i$, finitely presented algebraic spaces $Y_i$ and maps $\cX_i\xrightarrow{\pi_i} Y_i\to S_i$ where $\pi_i$ has affine diagonal and the base change of $\cX_i\xrightarrow{\pi_i} Y_i\to S_i$ is $\cX\xrightarrow{\pi} Y\to S$. By \cite[Theorem 5.10]{RydhNoetherianAbsolute}, there is an index $i$ for which $\cX_i$ admits a good moduli space $\cX_i\to X_i$. By the universal property of good moduli space (using that $\cX_i$ is locally Noetherian), \cite[Theorem 6.6]{Alper} tells us $\pi_i$ factors as $\cX_i\xrightarrow{\alpha_i} X_i\xrightarrow{\beta_i}Y_i$. Letting $\cX\xrightarrow{\alpha} X\xrightarrow{\beta}Y$ be the base change from $S_i$ to $S$, \cite[Proposition 4.7]{Alper} implies $\alpha$ is a good moduli space, hence $\beta$ is an isomorphism. Thus, \cite[Propositions B.3]{Rydh2015} tells us $\beta_i$ is an isomorphism for sufficiently large $i$.
		
		Thus we may assume $S$ is finite type over $S_0$ and in particular Noetherian, so $Y$ has finitely many irreducible components. Letting $Y'$ be the disjoint union of the connected components, we have an alteration $Y'\to Y$, so it suffices to consider each irreducible component separately.
	\end{proof}
	
	Before the next result, we make some definitions we need to apply the results of \cite{ER}. 
	
	\begin{definition}\label{def:ps}
		Let $\pi \colon \cX \to Y$ be a good moduli space. We say a point $x \in \cX$ is
		\begin{enumerate}[(i)]
			\item \emph{stable} if $\pi^{-1}(\pi(x)) = \{x\}$, and
			\item \emph{polystable} if $\{x\}$ is closed inside $\pi^{-1}(\pi(x))$. 
		\end{enumerate}
	\end{definition}
	
	\begin{lemma}\label{lem:ps} Let $\pi \colon \cX \to Y$ be a good moduli space with $\cX$ locally Noetherian. 
		\begin{enumerate}[(i)]
			\item The set of stable points form an open substack $\cX^{\s} \subset \cX$. 
			\item The set of polystable ponts for a constructible substack $\cX^{\ps} \subset \cX$ containing $\cX^{\s}$. 
			\item For any algebraically closed field $k$, the good moduli map induces a bijection $\cX^{\ps}(k) \to Y(k)$. 
		\end{enumerate}
	\end{lemma}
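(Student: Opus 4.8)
\textbf{Proof proposal for Lemma \ref{lem:ps}.}

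The plan is to reduce everything to the local model of a good moduli space. By Alper's local structure theorem (building on \cite{Alper,AHRetalelocal}), \'etale-locally on $Y$ the map $\pi$ has the form $[\Spec A/G]\to \Spec A^G$ with $G$ linearly reductive, and since the three assertions are all \'etale-local on $Y$ and stable/polystable points are defined fiberwise, it suffices to treat this affine quotient case. For part (i), I would use the standard fact that in $[\Spec A/G]$ the stable locus (points whose orbit is closed \emph{and} has finite, or here trivial, stabilizer-in-fiber condition $\pi^{-1}\pi(x)=\{x\}$) is open: a point $x$ is stable iff its $G$-orbit in $\Spec A$ is closed and is the unique closed orbit in its fiber of $\Spec A\to\Spec A^G$, and openness follows from upper semicontinuity of orbit dimension together with the fact that $\pi$ is universally closed, so the locus where the fiber is a single point is open. (Alternatively one cites that $\cX^{\s}$ is the largest open substack on which $\pi$ is a coarse — in fact, in this setting, an — isomorphism onto its image, hence manifestly open.)

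For part (ii), the key input is that a good moduli space map is universally closed with geometrically connected fibers having a unique closed point, so over each point $y\in Y$ there is a unique closed orbit, i.e., a unique polystable point in $\pi^{-1}(y)$; hence set-theoretically $\cX^{\ps}\to Y$ is a bijection on points of each residue field, and $\cX^{\ps}$ is the image of a section-like locus. Constructibility: in the local model $[\Spec A/G]\to\Spec A^G$, the polystable points correspond to closed $G$-orbits, and the locus of closed orbits in $\Spec A$ is constructible by a Noetherian induction / generic-flatness argument (e.g.\ one stratifies $\Spec A^G$ so that over each stratum the closed-orbit locus is cut out nicely; this is classical in GIT, cf.\ the Luna-type stratification). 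The containment $\cX^{\s}\subseteq\cX^{\ps}$ is immediate since if $\pi^{-1}(\pi(x))=\{x\}$ then certainly $\{x\}$ is closed in $\pi^{-1}(\pi(x))$.

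For part (iii), let $k$ be algebraically closed. Surjectivity of $\cX^{\ps}(k)\to Y(k)$ is exactly the statement that every fiber $\pi^{-1}(y)$ for $y\in Y(k)$ contains a closed point, which holds because $\pi$ is of finite type (quasi-compact) so the nonempty fiber has a closed point, and any closed point of the fiber is a polystable point of $\cX$ mapping to $y$ (here one uses that the fiber is itself a good-moduli-type quotient stack over $k$, so its closed points are its polystable points). Injectivity: if $x_1,x_2\in\cX^{\ps}(k)$ both map to $y$, then both are closed points of the fiber $\pi^{-1}(y)$; since $\pi$ has geometrically connected fibers with a \emph{unique} closed point (a defining feature of good moduli spaces, via the Stein and cohomologically-affine conditions — concretely, in the local model a fiber of $\Spec A\to\Spec A^G$ contains a unique closed $G$-orbit), we get $x_1=x_2$. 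I expect the main obstacle to be part (ii), specifically verifying \emph{constructibility} of $\cX^{\ps}$ rigorously: openness (i) and the bijection (iii) follow fairly directly from universal closedness and the local structure theorem, but constructibility requires either quoting the GIT/Luna stratification in the appropriate generality for linearly reductive (not necessarily smooth or reductive over a field) group schemes, or running a Noetherian-induction argument on $Y$ — I would look for the cleanest citation, likely from \cite{ER} or the Alper circle of papers, rather than reproving it.
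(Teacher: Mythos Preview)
Your proposal is correct and follows essentially the same route as the paper: reduce \'etale-locally on $Y$ to the affine quotient model via the local structure theorem, then invoke standard facts from the Alper circle for each part. The only substantive difference is in (ii): the paper makes constructibility concrete by passing to the local model $[\Spec A/\GL_n]$ (via \cite[Theorem 6.1]{AHRetalelocal}) and identifying $\cX^{\ps}$ with $[V/\GL_n]$, where $V\subset\Spec A$ is the locus of points whose $\GL_n$-orbit has minimal dimension in its fiber over $\Spec A^{\GL_n}$ (as in \cite[Theorem 7.8]{xuzhuang}); you instead gesture toward a Luna-type stratification or Noetherian induction. Both work, but the orbit-dimension description gives constructibility in one line from upper semicontinuity of orbit dimension, so that is the citation you were looking for.
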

	\begin{proof} Part (i) is \cite[Proposition 2.6]{ER}.
		
		For part (ii), note that the polystable locus commutes with base change over $Y$ since the formation of good moduli spaces does. Moreover, constructibility can be checked \'etale locally so by \cite[Theorem 6.1]{AHRetalelocal}, we are reduced to the case where $\cX = [\Spec A/\GL_n]$ where one can argue as in the proof of \cite[Theorem 7.8]{xuzhuang}: $\cX^{\ps} = [V/\GL_n]$ where $V$ is the constructible subset
		$$
		V = \{x \in \Spec \mid \dim G \cdot x < \dim G \cdot y \text{ for all } y \in \pi^{-1}(\pi(x))\}. 
		$$
		
		For part (iii), note that $\cX^{ps}(k)$ makes sense assuming part (ii). Then the bijection follows from \cite[Theorem 4.16(iv) \& Proposition 9.1]{Alper}.  
		
	\end{proof}

	\begin{proposition}\label{prop:gerbe-case}
		To prove Theorem \ref{thm:main-gms-splitting}, we may assume $Z\to Y$ is the identity map, $Y$ is irreducible, and $\pi$ is a gerbe.
		
	\end{proposition}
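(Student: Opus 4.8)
The plan is to reduce to a gerbe over $Y$ in two stages: first reduce the good moduli space $\pi\colon\cX\to Y$ to the case where it is the good moduli space of the closure of the polystable locus, and then stratify $Y$ so that over each stratum the relevant stack becomes a gerbe. By Lemma \ref{l:SNoeth} we may already assume $Z\to Y$ is the identity, $Y$ is irreducible and separated, and $S$ is finite type over a localization of $\Spec\bZ$, hence Noetherian; in particular $\cX$ is locally Noetherian, so Lemma \ref{lem:ps} applies. Let $\overline{\cX^{\ps}}\subseteq\cX$ be the closure of the polystable locus with its reduced structure. Since $\pi$ is a good moduli space and $\cX^{\ps}(k)\to Y(k)$ is a bijection for every algebraically closed $k$ (Lemma \ref{lem:ps}(iii)), the restriction $\pi|_{\overline{\cX^{\ps}}}\colon\overline{\cX^{\ps}}\to Y$ is still surjective and proper in the appropriate sense; I would check it is again a good moduli space onto its (schematic) image, which is all of $Y$ by surjectivity, so it suffices to produce an alteration $Z'\to Y$ with a lift $Z'\to\overline{\cX^{\ps}}$ — this also yields the factorization through $\overline{\cX^{\ps}}$ promised in Remark \ref{rmk:properlystable-factorization}(i). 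Thus we may replace $\cX$ by $\overline{\cX^{\ps}}$ and assume the polystable locus is dense.

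Next I would put a nice structure on the generic behavior of $\pi$. Over a dense open $Y^\circ\subseteq Y$, the stabilizer group scheme of $\cX$ along the (now dense) polystable locus has locally constant dimension, and generically a single $\textbf{S}$-equivalence class is represented by a closed polystable point with reductive stabilizer; after shrinking $Y$ and passing to a connected finite (in particular alteration) cover to rigidify — using the existence of finite covers trivializing the relevant moduli data over the generic point — the preimage $\cX\times_Y Y^\circ$ (or rather its polystable locus, suitably rigidified) becomes a gerbe over $Y^\circ$. Concretely: the closed polystable points over $Y^\circ$ give a section of $\pi$ up to an alteration of $Y^\circ$, and a stack with a good moduli space all of whose geometric fibers are a single point with an action is a gerbe over that point. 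This handles the generic stratum. I would then argue by Noetherian induction on $\dim Y$: the alteration is constructed over $Y^\circ$, and over the closed complement $Y\setminus Y^\circ$ — which has strictly smaller dimension and over which $\cX\times_Y(Y\setminus Y^\circ)\to Y\setminus Y^\circ$ is again a good moduli space of the stated type — the inductive hypothesis (really, the full strength of Theorem \ref{thm:main-gms-splitting}, applied in lower dimension) produces the needed alteration. Gluing the two alterations along the boundary, via a blowup/Chow-lemma argument to make things proper and compatible, yields the alteration of all of $Y$.

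The main obstacle I expect is the gluing step and, more precisely, verifying that the generic reduction genuinely lands in the \emph{gerbe} situation rather than something slightly weaker (e.g., a gerbe-like map whose band is not yet constant). One must be careful that after rigidifying by the smooth part of the inertia one gets a map that is étale-locally $BG$ over the base — this is exactly the kind of analysis carried out in Propositions \ref{prop:gerbe-connected} and \ref{prop:gerbe-abelian}, and the point of this proposition is to set up their hypotheses. The delicate bookkeeping is that an alteration of $Y^\circ$ and an alteration of $Y\setminus Y^\circ$ need not patch to an alteration of $Y$ on the nose; the standard fix is to take the closure of the graph of the generic lift inside a suitable proper model (Chow's lemma for algebraic spaces, Remark \ref{rmk:properlystable-factorization}(ii)), normalize, and then the induced map to $Y$ is automatically proper, surjective, and generically finite, with every component dominating a component of $Y$. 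I would also need to ensure the lift $Z'\to\cX$ is compatible on the overlap, which follows because both restrict to the same generic lift and $\cX$ has separated diagonal over $Y$, so two maps agreeing on a dense open of a reduced scheme agree.
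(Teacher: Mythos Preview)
Your reduction to the closure of the polystable locus is correct and matches the paper. After that, however, your approach diverges and contains a genuine gap at precisely the step you flag as the main obstacle.

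The paper does \emph{not} stratify $Y$ and glue. Instead, after replacing $\cX$ by the reduced closure of $\cX^{\ps}$ and then by a suitable irreducible component (so that the stable locus $\cX^{\s}$ is open and dense), it invokes \cite[Theorem~2.11]{ER}: this produces a \emph{global} diagram in which a sequence of saturated blow-ups $\cX'\to\cX$ sits over a sequence of blow-ups $Y'\to Y$, and the new good moduli space $\pi'\colon\cX'\to Y'$ factors as a gerbe over a tame Artin stack followed by a coarse space map. One then splits the coarse space by a finite cover using \cite[Theorem~2.7]{EHKV}, and after this base change $\pi$ is literally a gerbe over all of $Y$. No patching is needed.

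Your proposed fix for the gluing step --- ``take the closure of the graph of the generic lift inside a suitable proper model'' --- does not work here. The map $\pi\colon\cX\to Y$ is not proper (a good moduli space map almost never is, unless it is already a coarse space), so there is no proper model into which to take closures that still carries a map to $\cX$. Concretely: if $Z'^\circ\to\cX$ is your lift over $Y^\circ$ and $Z'^\circ\subset Z'$ is a compactification proper over $Y$, the closure of the graph in $Z'\times_Y\cX$ is not proper over $Z'$, so you do not get an alteration of $Y$ with a lift to $\cX$. Extending a map to $\cX$ from a dense open is exactly the content of Theorem~\ref{thm:main-gms-splitting-rational-version}, which is strictly harder than Theorem~\ref{thm:main-gms-splitting} and requires extra hypotheses on $S$. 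Your remark about separated diagonal gives only uniqueness of an extension, not existence. The missing ingredient is a global structural result like \cite[Theorem~2.11]{ER} that transforms $\pi$ into a gerbe (up to a tame quotient) over all of $Y$ simultaneously.
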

	\begin{proof}
		By Lemma \ref{l:SNoeth}, we may assume $S$ is Noetherian, $Z\to Y$ is the identity map, and $Y$ is irreducible. Our initial goal is to apply \cite[Theorem 2.11]{ER}. To do so we argue as in the proof of \cite[Theorem 14.11]{ABBDILW} to first reduce to a setting where the hypotheses of the theorem hold. Let $\cX^{\ps}\subset\cX$ be the polystable locus, and if $\cZ\subset\cX$ denotes its closure, then the composition $\cZ\to\cX\to Y$ is a good moduli space map by \cite[Lemma 4.14]{Alper} and Lemma \ref{lem:ps}(iii). Thus, it suffices to prove Theorem \ref{thm:main-gms-splitting} under the assumption that $\cX^{\ps}$ is dense. Since $\pi$ is finitely presented, $\cX$ has finitely many irreducible components. Thus, some irreducible component $\cZ\subset\cX$ has $\pi(\cZ)=Y$, and hence, $\cZ\to Y$ is a good moduli space. We may therefore assume $\cX$ is irreducible. We may further replace $\cX$ by its reduction. Let $\eta$ be the generic point of $\cX$. Since $\cX^{\ps}$ is dense and constructible (Lemma \ref{lem:ps}(ii)), it must contain $\eta$ so $\eta$ is polystable. Thus it is both closed and dense in $\pi^{-1}(\pi(\eta))$ so $\eta$ is in fact stable. Thus $\cX^{\s} \subset \cX$ is open and dense (Lemma \ref{lem:ps}(i)). 
		
		We may therefore apply \cite[Theorem 2.11]{ER}, which yields a commutative diagram
		\[
		\xymatrix{
			\cX'\ar[r]^-{p'}\ar[d]_-{\pi'} & \cX\ar[d]^-{\pi}\\
			Y'\ar[r]^p & Y
		}
		\]
		where $p'$ is a sequence of saturated blow-ups, $p$ is a sequence of blow-ups, $\cX'$ has affine diagonal (an implicit assumption in their paper), and $\pi'$ factors as a gerbe over a tame Artin stack followed by a coarse space map. Thus, replacing $Y$ with $Y'$, we may assume $\pi$ itself factors as $\cX\xrightarrow{\alpha}\cY\xrightarrow{\beta} Y$ with $\alpha$ a gerbe, $\beta$ a coarse space map, and $\cY$ a tame Artin stack. Since $\cY\to S$ has affine diagonal (an implicit assumption in \cite{ER}) and since $Y\to S$ has separated diagonal by \cite[Tag 02X4]{stacks-project}, we see $\beta$ has affine diagonal. Then applying \cite[Proposition 3.13]{Alper}, we see $\alpha$ is cohomologically affine; since it is a gerbe, it is also Stein hence a good moduli space map.  Furthermore, the Keel--Mori Theorem tells us $\beta$ is proper. 
		Since $\beta$ has finite inertia, hence quasi-finite diagonal, by \cite[Theorem 2.7]{EHKV}, there is a finite cover $Z\to\cY$ by a scheme. Then $Z\to Y$ is proper quasi-finite, hence finite.
		
		Thus, replacing $Y$ by $Z$, we may assume $\pi$ is a gerbe.
	\end{proof}
	
	We now turn to our main theorems.
	
	\begin{proof}[{Proof of Theorems \ref{thm:main-gms-splitting} and \ref{thm:main-gms-splitting-nonNoeth}}]
		By Lemma \ref{l:SNoeth}, we have reduced Theorems \ref{thm:main-gms-splitting} and \ref{thm:main-gms-splitting-nonNoeth} to the special case of Theorem \ref{thm:main-gms-splitting} where $Z\to Y$ is the identity map, $Y$ is irreducible, and $S$ is finite type over a localization of $\mathbb{Z}$. Then Proposition \ref{prop:gerbe-case} allows us to assume $\pi$ is a gerbe. By Proposition \ref{prop:gerbe-connected} and Corollary \ref{cor:gerbe-conn-split}, we may assume $\pi$ has connected reductive stabilizers at all field valued points. Then \cite[Ch.~V, sect.~3.2, p.~75]{Douai} says 
		$\pi$ is necessarily banded by a reductive group scheme $G\to Y$. Applying Proposition \ref{prop:gerbe-abelian}, it suffices to prove Theorem \ref{thm:main-gms-splitting} for all $Z(G)$-gerbes. Another application of Proposition \ref{prop:gerbe-connected} allows us to replace $Z(G)$ with $Z(G)^0_{\sm}$, hence we may assume $\pi$ is a gerbe banded by a smooth connected multiplicative group scheme $G\to Y$. 
		
		Further replacing $Y$ by its normalization and considering each connected component individually, we can assume $Y$ is normal and irreducible. Then \cite[Corollary B.3.6]{ConradReductive} tells us $G\to Y$ is isotrivial, i.e., after replacing $Y$ by a finite \'etale cover, we may assume $G\to Y$ is isomorphic to $\bG_m^r$. Then $\pi$ is classified by an element of 
		\[
		H^2_{\textrm{\'et}}(Y,G)= H^2_{\textrm{\'et}}(Y,\bG_m)^{\oplus r}.
		\]
		It therefore suffices to show that any $\alpha \in H^2_{\textrm{\'et}}(Y,G)$ can be annihilated by an alteration of $Y$. Let $F = k(Y)$ be the function field and consider $\alpha|_F$. By \cite[Cor. 1.3.6]{ColliotTheleneSkorobogatov}, $\alpha|_F$ is torsion and there exists a finite extension $F'/F$ which annihilates it. Let $Y' \to Y$ be the normalization of $Y$ inside $F'$ and $Y'' \to Y'$ be an alteration of $Y'$ with $Y''$ regular which exists by Chow's lemma for algebraic spaces \cite[IV Theorem 3.1]{Knutson1971} and \cite[Theorem 1.2.5]{Temkin2017}. Then the pullback of $\alpha$ to the function field of $Y''$ vanishes by construction. Thus $\alpha|_{Y''} = 0$ by injectivity of $H^2_{\textrm{\'et}}(Y'', \mathbb{G}_m) \to H^2_{\textrm{\'et}}(k(Y''), \mathbb{G}_m)$ \cite[Theorem 3.5.4]{ColliotTheleneSkorobogatov}. 
		
	\end{proof}
	
	Note that Remark \ref{rmk:properlystable-factorization}(i) holds since we began Proposition \ref{prop:gerbe-case} by replacing $\cX$ with $\cX^{\ps}$.

	\section{Proof of Theorem \ref{thm:main-gms-splitting-rational-version}}
	
	The proof of Theorem \ref{thm:main-gms-splitting-rational-version} is similar to that of Theorem \ref{thm:main-gms-splitting}. To begin, the first and last paragraphs of the proof of Lemma \ref{l:SNoeth} shows we can again assume $Z\to Y$ is the identity map and $Y$ is irreducible and separated over $S$.

	We next follow the argument in Proposition \ref{prop:gerbe-case}, however the proof needs some modifications. First, the assumption that the generic point of $Y$ maps to $\cX^{\ps}$ allows us to replace $\cX$ by $\cX^{\ps}$. We may then apply \cite[Theorem 2.11]{ER} as before. Note that the maps $p$ and $p'$ do not change $\cX$ or $Y$ generically, so our generic section of $\pi$ defines a generic section of $\pi'\colon\cX'\to Y'$. The remainder of the proof of Proposition \ref{prop:gerbe-case} is the same with one notable change. Previously we applied \cite[Theorem 2.7]{EHKV} to find a splitting for the map $\beta$; we must now ensure we can choose this splitting in such a way that it extends the generic section. Thus, we replace the use of \cite[Theorem 2.7]{EHKV} with the following.
	
	\begin{proposition}\label{prop:tame-DM-proper-generic-splitting-extension}
		Suppose $\pi\colon\cX\to Y$ is the coarse space map of an irreducible finite type separated tame Artin stack over a Noetherian algebraic space $S$.  
		If $i\colon U\hookrightarrow Y$ is the inclusion of a dense open subspace and $f\colon U\to\cX$ a morphism with $\pi f=i$, then there exists a finite cover $h\colon Z\to Y$ and an extension of $f|_{Z_U}$ to a map $g\colon Z\to\cX$ with $\pi g=h$.
	\end{proposition}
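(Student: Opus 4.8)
The plan is not to extend $f$ directly, but to pull back a fixed finite cover of $\cX$ by a scheme along $f$ and then take a scheme-theoretic closure. First, since $\cX$ is a finite type separated tame Artin stack it has finite inertia, so by the Keel--Mori theorem $\pi$ is proper, and by \cite[Theorem 2.7]{EHKV} there is a finite surjective morphism $\psi\colon W\to\cX$ with $W$ a scheme of finite type over $S$. The composite $q:=\pi\psi\colon W\to Y$ is then proper (as $\psi$ is finite and $\pi$ is proper) and quasi-finite (its geometric fibers are finite, since those of $\psi$ are and those of $\pi$ are single points), hence finite. Note also that $\pi$ is separated, since $\cX\to S$ is separated and factors through $\pi$.

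Next I would form $W_U:=W\times_{\cX,f}U$, which is finite and surjective over $U$, being the base change of the surjection $\psi$ along $f$. The key point is that, because $f$ is a \emph{section} of the separated morphism $\pi$, the induced map $U\to\cX\times_Y U$ is a closed immersion; base-changing it along $W\times_Y U\to\cX\times_Y U$ identifies $W_U$ with a closed subscheme of $W\times_Y U=q^{-1}(U)$. In particular the projection $\mathrm{pr}_W\colon W_U\to W$ is a locally closed immersion. I would then take $Z\subseteq W$ to be the scheme-theoretic closure of $W_U$ in $W$, and set $h:=q|_Z\colon Z\to Y$ and $g:=\psi|_Z\colon Z\to\cX$.

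It then remains to verify four routine points. First, $Z\times_Y U=Z\cap q^{-1}(U)=W_U$, since scheme-theoretic closure commutes with restriction to the open subscheme $q^{-1}(U)\subseteq W$ and $W_U$ is already closed there. Second, $h$ is finite (a closed immersion followed by the finite map $q$) and surjective, since its image is closed in $Y$ and contains the dense open $U$ (because $W_U\to U$ is surjective); thus $h$ is a finite cover. Third, $\pi g=\pi\psi|_Z=q|_Z=h$. Fourth, over $Z_U:=Z\times_Y U=W_U$ we have $g|_{Z_U}=\psi\circ\mathrm{pr}_W$, which by the Cartesian square defining $W_U$ equals $f\circ\mathrm{pr}_U=f|_{Z_U}$, where $\mathrm{pr}_U\colon W_U\to U$ is the other projection; so $g$ indeed restricts to $f|_{Z_U}$.

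I expect the only genuinely delicate point to be the second paragraph: checking that $W_U$ is a \emph{closed} subscheme of $q^{-1}(U)$, which rests on separatedness of $\pi$ so that the section $U\to\cX\times_Y U$ is a closed immersion, and that passing to the scheme-theoretic closure in $W$ and restricting back to $q^{-1}(U)$ recovers $W_U$. Everything else — finiteness of $q$, surjectivity of $h$, and the identification $g|_{Z_U}=f|_{Z_U}$ — is formal, flowing from the defining fiber-product square of $W_U$ and the fact that $f$ is a section of $\pi$.
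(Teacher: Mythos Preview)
Your proof is correct and takes a genuinely different route from the paper's. The paper applies Zariski's Main Theorem for stacks \cite[Theorem 16.5]{LMB} to the section $f\colon U\to\cX$ itself, factoring it as an open immersion $U\hookrightarrow\cZ$ followed by a finite map $\cZ\to\cX$, and only then invokes \cite[Theorem 2.7]{EHKV} to pass to a finite scheme cover $Z\to\cZ$. You instead fix a single finite scheme cover $W\to\cX$ at the outset, pull it back along $f$, and take a scheme-theoretic closure inside $W$. Your argument is more elementary in that it avoids Zariski's Main Theorem entirely; the only nontrivial input is the observation that a section of a separated morphism is a closed immersion, which makes $W_U$ closed in $q^{-1}(U)$. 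Your approach also yields the identity $Z_U=W_U$ on the nose, so the verification that $g|_{Z_U}=f|_{Z_U}$ is immediate from the defining Cartesian square, whereas the paper has to compare two a priori different opens $U'=U\times_{\cZ}Z$ and $U''=U\times_Y Z$ and argue that one embeds in the other. The paper's route, on the other hand, produces a $\cZ$ that is intrinsically attached to $f$ (a ``compactification'' of $U$ over $\cX$), which is conceptually natural even if not strictly needed for the statement.
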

	\begin{proof}
		First, $\pi$ is proper by the Keel--Mori theorem since $\cX$ has finite inertia. Then $f$ is separated since $\pi$ is proper and $i=\pi f$ is separated by \cite[Tag 01L7]{stacks-project}. We also see $f$ is quasi-finite since $\pi f=i$ is. Since $\cX$ is irreducible, $Y$ is as well; it follows that $i$ is finite type hence $f$ is too. Then $f$ is separated, finitely presented, and representable, so Zariski's Main Theorem \cite[Theorem 16.5]{LMB} tells us there is an open immersion $j\colon U\hookrightarrow\cZ$ and a finite map $p\colon\cZ\to\cX$.
		
		Next, \cite[Theorem 2.7]{EHKV} says there is a finite cover $Z\to\cZ$. Then $Z\to Y$ is proper quasi-finite, hence finite. Note that $U'\colon=U\times_\cZ Z\to Z$ is an open immersion with dense image and the map $Z\to\cX$ agrees with the map $U'\to U\to\cX$. Lastly, we must show that the map $Z\to\cX$ when restricted to the open subset $U''\colon=U\times_Y Z$ extends $f$. For this, it suffices to show the natural map $U'\to U''$ is open immersion. For this, note that $U'\to Z$ and $U''\to Z$ are \'etale monomorphisms, hence $U'\to U''$ is as well, and is therefore an open immersion by \cite[Tag 025G]{stacks-project}.
	\end{proof}
	
	As before, we next reduce to the case where $\pi$ is banded by a reductive group scheme $G\to Y$ by Proposition \ref{prop:gerbe-connected}, Corollary \ref{cor:gerbe-conn-split}, and \cite[Ch.~V, sect.~3.2, p.~75]{Douai}. Here the proof of Corollary \ref{cor:gerbe-conn-split} needs to be modified, replacing the use of \cite[Theorem 2.7]{EHKV} by Proposition \ref{prop:tame-DM-proper-generic-splitting-extension}. We then apply Proposition \ref{prop:gerbe-abelian}, Proposition \ref{prop:gerbe-connected}, and Corollary \ref{cor:gerbe-conn-split} to reduce to the case of tori. One need only check that in the setting of Proposition \ref{prop:gerbe-abelian}, our given morphism $U \to \cX$ lifts to a $U$-point of $\sI = \uIsom_Y^{\varphi}(\cX, BG)$ and a $U$-point of $\cX$ induces a banded isomorphism $\cX|_U \cong BG|_U$ that sends the given $U$-point to the canonical section of $BG|_U$ as required.

	The same argument reduces us to the case $G=\bG_m$. Since $S$ is finite type over a quasi-excellent scheme of dimension at most $3$, Chow's lemma for algebraic spaces \cite[IV Theorem 3.1]{Knutson1971} and \cite[Theorem 1.2.5]{Temkin2017} shows that after an alteration, we may assume $Y$ is regular. Then as before, by purity for Brauer groups \cite[
	Theorem 3.5.4]{ColliotTheleneSkorobogatov}, we have injectivity of the map
	\[
	H^2_{\textrm{\'et}}(Y,\bG_m)\hookrightarrow H^2_{\textrm{\'et}}(U,\bG_m),
	\]
	and thus our generically trivial $\mathbb{G}_m$-gerbe is trivial over $Y$ and the given section over $U$ which we have identified with the canonical section of $B\mathbb{G}_m$ extends to $Y$. 
	
	\begin{proof}[Proof of Corollary \ref{cor:compactification}]
		
		By Chow's lemma, we can assume $U$ is quasi-projective. Let $W$ be some projective compactification and let $Z$ be the closure of the graph of $U \to Y$ inside $W \times_S Y$. Then $Z$ is a projective compactification of $U$ with a morphism $Z \to Y$ and lift $s_U : U \to \cX$. By the above argument, there exists an alteration $Z' \to Z$ with $U' \colon = U \times_Z Z'$ and an extension of the induced morphism $U' \to \cX$ to a morphism $Z' \to \cX$ as required. 
	\end{proof}
	
	\begin{proof}[Proof of Corollary \ref{cor:variation}]
		
		Let $Y' \subset Y$ be the scheme theoretic image of $Z \to Y$. Up to replacing $\cX$ with $\cX' = Y' \times_{Y}\cX$, we may assume that $Z \to Y$ is dominant, $Y$ is irreducible and $\Var g = \dim Y$. By Theorem \ref{thm:main-gms-splitting}, there exists an alteration $Y' \to Y$ with $Y'$ irreducible a map $f' \colon= Y' \to \cX$. Let $Z'$ be a component of the fiber product $Z \times_Y Y'$ which dominates both $Z$ and $Y'$. Then taking $Z'' \colon = Y'$, $Z' \to Z$, $Z' \to Z''$, and $g'' = f'$ are as required. 
		
	\end{proof}
	
	\section{Projectivity of good moduli spaces:~proof of Theorem \ref{thm:kollar-ampleness}}
	
	We next apply Theorem \ref{thm:main-gms-splitting} to generalize \cite[Theorem 4.5]{KovacsPatakfalvi2017} and Koll\'ar's ampleness lemma \cite[Lemma 3.9]{KollarAmpleness}, obtaining a criterion for projectivity of good moduli spaces.
	
	\begin{proof}[{Proof of Theorem \ref{thm:kollar-ampleness}}]
		We apply Nakai--Moishezon. Let $i\colon Z\hookrightarrow Y$ be a $d$-dimensional integral closed subalgebraic space of $Y$. We must show the intersection product $(i^*\cL)^d>0$. By Theorem \ref{thm:main-gms-splitting}, there exists an alteration $f\colon W\to Z$ and a map $s\colon W\to\cX$ such that $\pi s=if$. We may assume $W$ is normal after replacing it with its normalization and projective by applying Chow's lemma. Furthermore, by Remark \ref{rmk:properlystable-factorization}, we may choose $s$ so that the generic point of $W$ factors through $\cX^{\ps}$. By the projection formula, it suffices to prove $(f^*i^*\cL)^d>0$. 
		
		We see that $f^*i^*\cL=\det(s^*\cQ)^{\otimes N}$ and that $s^*\alpha\colon s^*\cE\twoheadrightarrow s^*\cQ$ is a quotient bundle. The induced classifying map $[s^*\alpha] \colon W \to [\Gr(n,q)/G]$ factors as $[\alpha] \circ s$. We claim that $[s^*\alpha]$ is set theoretically generically finite. It suffices to check this on algebraically closed field $\overline{k}$-points. Then $Z(\overline{k}) \hookrightarrow |\cX^{ps}(\overline{k})|$, $W(\overline{k}) \to Z(\overline{k})$ is generically finite, and $[\alpha](\overline{k})$ is set theoretically finite on polystable points by assumption so the composition $[\alpha] \circ s$ is set theoretically generically finite. 
		
		Next we check that $s^*\cE$ is weakly-positive in the sense of Viehweg (\cite[Definition 3.7]{KovacsPatakfalvi2017}). By \cite[\S 3.4(d)]{viehweg}, $s^*\cE$ is weakly-positive if and only if $\cO_{\bP(s^*\cE)}(1)$ is weakly-positive on $\bP(s^*\cE)$. For a line bundle $\cN$, weakly positive is equivalent to $N := c_1(\cN)$ pseudoeffective. Indeed $\cN$ is weakly-positive if and only if $\cN^{\alpha \beta}\otimes \cH^\beta$ is effective for any $\alpha > 0$ and any $\beta$ large enough (depending on $\alpha$) where $\cH$ is an ample line bundle. Writing additively and dividing by $\beta$, we see that this is the case if and only if $N + \frac{1}{\alpha}H$ is an effective $\mathbb{Q}$-divisor for any $H = c_1(\cH)$ ample and any $\alpha > 0$. This holds if and only if $N$ is pseudoeffective by \cite[Theorem 2.2.26]{positivity}. 
		
		Since the generic point of $W$ maps to the polystable which is constructible, there exists a dense open set $U \subset W$ such that $s|_U \colon U \to \cX$ factors through $\cX^{\ps}$. By condition (3), for any curve $C \subset W$ which meets $U$, any line bundle quotient of $s^*\cE|_C$ has non-negative degree. Thus $N:=c_1(\cO_{\bP(s^*\cE)})$ has non-negative degree on any lift of any curve $C$ meeting $U$. In particular, $N$ has non-negative degree on any movable curve on $\bP(s^*\cE)$ so by \cite[Theorem 0.2]{bdpp} and its extension to positive characteristic \cite[Theorem 2.22]{fulgerlehmann}, $N$ is pseudoeffective so $s^*\cE$ is weakly-positive. 
		
		Then by Remark 4.4 and Theorem 4.5 of \cite{KovacsPatakfalvi2017}\footnote{Note that the only place characteristic $0$ is used in \emph{loc. cit.} is in \cite[Proposition 4.7]{KovacsPatakfalvi2017} and this is taken care of in positive characteristic by the conditions in assumption (3) (see \cite[Remark 4.8]{KovacsPatakfalvi2017}).} (which is a more general version of \cite[Lemma 3.9]{KollarAmpleness} where the finiteness assumption on the classifying map is weakened), $\det(s^*\cQ)$ is big and nef, hence $(f^*i^*\cL)^d>0$ as required.
	\end{proof}
	
	\begin{remark}
		There are some other versions of the ampleness lemma in the literature that are applied to the good moduli space of an Artin stack in specific examples. They are used to prove ampleness of the CM line bundle (and thus projectivity) of the good moduli space of the stack $\cM^{Kss}$ of K-semistable Fano varieties \cite{codognipatakfalvi, posva, xuzhuang}. Splittings of good moduli spaces after an alteration (Theorem \ref{thm:main-gms-splitting}) can be used to extend and streamline some of the arguments there (\cite[Section 10]{codognipatakfalvi} and \cite[Section 7.1]{xuzhuang}). 
	\end{remark}

	\bibliographystyle{alpha}
	\bibliography{gms-base-change}
	
\end{document}